\newcommand{\reals}{\mathbb{R}}
\newcommand{\complex}{\mathbb{C}}
\newcommand{\para}[1]{\left(#1\right)}
\newcommand{\paraa}[1]{\big(#1\big)}
\newcommand{\spacearound}[1]{\quad#1\quad}
\renewcommand{\implies}{\spacearound{\Rightarrow}}
\newtheorem{theorem}{Theorem}[section]
\newtheorem{corollary}[theorem]{Corollary}
\newtheorem{proposition}[theorem]{Proposition}
\newtheorem{example}[theorem]{Example}
\theoremstyle{definition}
\newtheorem{definition}[theorem]{Definition}
\theoremstyle{remark}
\newtheorem{remark}[theorem]{Remark}
\numberwithin{equation}{section}
\renewcommand{\mid}{\mathds{1}}
\newcommand{\A}{\mathcal{A}}
\renewcommand{\d}{\partial}
\newcommand{\g}{\mathfrak{g}}
\newcommand{\Sthreetloc}{S^3_{\theta,\text{loc}}}
\newcommand{\C}{\mathcal{C}}
\newcommand{\nablah}{\hat{\nabla}}
\newcommand{\mathand}{\quad\text{and}\quad}
\newcommand{\psih}{\widehat{\psi}}
\newcommand{\psit}{\tilde{\psi}}
\newcommand{\MPsi}{M_\Psi}
\newcommand{\Hom}{(\phi,\psi,\ModHom)}
\newcommand{\Alg}{\mathcal{A}}
\newcommand{\Algprim}{\mathcal{A}'}
\newcommand{\Mod}{M}
\newcommand{\Modprim}{M'}
\newcommand{\Der}{\mathfrak{g}}
\newcommand{\Derprim}{\mathfrak{g}'}
\newcommand{\ModHom}{\psih}
\newcommand{\der}{\partial}
\newcommand{\Real}{\mathbb{R}}
\newcommand{\RCa}{(\Alg, \Der,\Mod,\varphi)}
\newcommand{\RCb}{(\Algprim, \Derprim,\Modprim,\varphi ')}
\newcommand{\ExtPsi}{\operatorname{Ext}_{\Psi}}
\newcommand{\Mani}{\Sigma}
\newcommand{\ManiSmooth}{\mathcal{C}^{\infty}(\Mani)}
\newcommand{\FofG}[2]{#1\left(#2\right)}
\newcommand{\DerSet}{\operatorname{Der}}
\newcommand{\Vect}{\operatorname{Vect}}
\newcommand{\CA}{C_{\A}}
\newcommand{\CAp}{C_{\A'}}
\newcommand{\Torus}{T^2_{\theta}}
\newcommand{\One}{\mathbbm{1}}
\newcommand{\Sphere}{S^3_{\theta}}
\newcommand{\hh}{\hat{h}}
\title[]{Noncommutative minimal embeddings and\\ morphisms of pseudo-Riemannian calculi}
\author{Joakim Arnlind and Axel Tiger Norkvist}
\address[Joakim Arnlind]{Dept. of Math.\\
Link\"oping University\\
581 83 Link\"oping\\
Sweden}
\email{joakim.arnlind@liu.se}
\address[Axel Tiger Norkvist]{Dept. of Math.\\
Link\"oping University\\
581 83 Link\"oping\\
Sweden}
\email{axel.tiger.norkvist@liu.se}
\subjclass[2010]{46L87} 
\keywords{}
\begin{document}

\begin{abstract}
  In analogy with classical submanifold theory, we introduce morphisms
  of real metric calculi together with noncommutative embeddings. We
  show that basic concepts, such as the second fundamental form and
  the Weingarten map, translate into the noncommutative setting and,
  in particular, we prove a noncommutative analogue of Gauss'
  equations for the curvature of a submanifold. Moreover, the mean
  curvature of an embedding is readily introduced, giving a natural
  definition of a noncommutative minimal embedding, and we illustrate
  the novel concepts by considering the noncommutative torus as a
  minimal surface in the noncommutative 3-sphere.
\end{abstract}

\maketitle

\section{Introduction}

\noindent
In recent years, a lot of progress has been made in understanding the
Riemannian aspects of noncommutative geometry. The Levi-Civita
connection of a metric plays a crucial role in classical Riemannian
geometry and it is important to understand to what extent a
corresponding noncommutative theory exists. Several impressive results
exist, which compute the curvature of the noncommutative torus from
the heat kernel expansion and consider analogues of the classical
Gauss-Bonnet theorem
\cite{ct:gaussBonnet,fk:gaussBonnet,fk:scalarCurvature,cm:modularCurvature}.
However, starting from a spectral triple, with the metric implicitly
given via the Dirac operator, it is far from obvious if there exists a
module together with a bilinear form, representing the metric
corresponding to the Dirac operator, not to mention the existence of a
Levi-Civita connection. In order to better understand what kind of
results one can expect, it is interesting to take a more naive
approach, where one starts with a module together with a metric, and
tries to understand under what conditions one may discuss metric
compatibility, as well as torsion and uniqueness, of a general connection.

In \cite{aw:cgb.sphere,aw:curvature.three.sphere,w:phdthesis}, pseudo-Riemannian
calculi were introduced as a framework to discuss the existence of
a metric and torsion free connection as well as properties of its
curvature. In fact, the theory is somewhat similar to that of
Lie-Rinehart algebras, where a real calculus (as introduced in
\cite{aw:curvature.three.sphere}) might be considered as a
``noncommutative Lie-Rinehart algebra''. Lie-Rinehart algebras have
been discussed from many points of view (see
e.g. \cite{r:differential.forms,h:poisson.cohomology} and
\cite{aa:kpalgebras} for an overview of metric aspects).  Although the
existence of a Levi-Civita connection is not always guaranteed in
the context of pseudo-Riemannian calculi, it was shown that the
connection is unique if it exists. The theory has concrete
similarities with classical differential geometry, and several ideas,
such as Koszul's formula, have direct analogues in the noncommutative
setting. Apart from the noncommutative torus, noncommutative spheres
were considered, and a Chern-Gauss-Bonnet type theorem was proven for
the noncommutative 4-sphere \cite{aw:cgb.sphere}. Note that there are several approaches to
metric aspects of noncommutative geometry, and Levi-Civita
connections, which are different but similar in spirit (see
e.g. \cite{lm:lie.extensions,fgr:supersymmetric.noncommutative,ac:ncgravitysolutions,bm:starCompatibleConnections,r:leviCivita,mw:quantum.koszul}).

In this paper, we introduce morphisms of real (metric) calculi and
define noncommutative (isometric) embeddings. We show that several
basic concepts of submanifold theory extends to noncommutative
submanifolds and we prove an analogue of Gauss' equations for the
curvature of a submanifold. Moreover, the mean curvature of an
embedding is defined, immediately giving a natural definition of a
(noncommutative) minimal embedding. As an illustration of the above
concepts, the noncommutative torus is considered as a minimal
submanifold of the noncommutative 3-sphere.

\section{Pseudo-Riemannian calculi}

\noindent
Let us briefly recall the basic definitions leading to the concept
of a pseudo-Riemannian calculus and the uniqueness of the Levi-Civita
connection. For more details, we refer to \cite{aw:curvature.three.sphere}.

\begin{definition}[Real calculus]
  Let $\Alg$ be a unital $\ast$-algebra, let
  $\Der\subseteq \text{Der}(\Alg)$ be a finite-dimensional (real) Lie
  algebra and let $\Mod$ be a (right)
  $\Alg$-module. Moreover, let $\varphi: \Der\rightarrow\Mod$ be a
  $\Real$-linear map whose image generates $\Mod$ as an $\Alg$-module. Then
  $C_{\Alg}=\RCa$ is called a \textit{real calculus over} $\Alg$.
\end{definition}

\noindent
The motivation for the above definition comes from the analogous
structures in differential geometry, as seen in the following example.

\begin{example}\label{ex:Manifold.Representation}
  Let $\Mani$ be a smooth manifold. Then $\Mani$ can be represented by
  the real calculus $C_{\Alg}=(\A,\g,M,\varphi)$ with
  $\Alg=\ManiSmooth$, $\Der=\DerSet(\ManiSmooth)$, $M=\Vect(M)$ (the
  module of vector fields on $\Sigma$) and choosing $\varphi$ to be
  the natural isomorphism between the set of derivations of
  $C^\infty(\Sigma)$ and smooth vector fields on $\Mani$.
\end{example}

\noindent
Next, since we are interested in Riemannian geometry, one introduces a
metric structure on the module $M$.

\begin{definition}
  Suppose that $\Alg$ is a $\ast$-algebra and let $\Mod$ be a right
  $\Alg$-module. A \textit{hermitian form on} $\Mod$ is a map
  $h:\Mod\times\Mod\rightarrow\Alg$ with the following properties:
 \begin{enumerate}[label=$h$\arabic*.]
 \item $h(m_1,m_2+m_3)=h(m_1,m_2)+h(m_1,m_3)$ 
 \item $h(m_1,m_2a)=h(m_1,m_2)a$ 
 \item $h(m_1,m_2)=h(m_2,m_1)^*$ 
 \end{enumerate}
 for all $m_1,m_2,m_3\in M$ and $a\in \A$.  Moreover, if $h(m_1,m_2)=0$
 for all $m_2\in\Mod$ implies that $m_1=0$ then $h$ is said to be
 \textit{non-degenerate}, and in this case we say that $h$ is a
 \textit{metric on} $\Mod$. The pair $(\Mod,h)$ is called a
 \textit{(right) hermitian} $\Alg$-\textit{module}, and if $h$ is a
 metric on $\Mod$ we say that $(\Mod,h)$ is a \textit{(right) metric}
 $\Alg$-\textit{module}.
\end{definition}

\begin{definition}[Real metric calculus]
  Suppose that $C_{\Alg}=(\A,\g,M,\varphi)$ is a real calculus over $\Alg$ and that
  $(\Mod,h)$ is a (right) metric $\Alg$-module. If
  \begin{align*}
    h(\varphi(\der_1),\varphi(\der_2))^*=h(\varphi(\der_1),\varphi(\der_2)) 
  \end{align*}
  for all $\der_1,\der_2\in\Der$ then the pair $(C_{\Alg},h)$ is
  called a \textit{real metric calculus}.
\end{definition}

\begin{example}\label{ex:Pseudo.Riemannian}
  Let $(\Mani,g)$ be a Riemannian manifold and let $C_{\Alg}$ be the
  real calculus from Example~\ref{ex:Manifold.Representation}
  representing $\Mani$. Then $(C_{\Alg},g)$ is a real metric calculus.
\end{example}

\noindent
In what follows, we shall sometimes require the metric to satisfy a
stronger condition than non-degeneracy.

\begin{definition}
  Let $h$ be a metric on $\Mod$ and let
  $\hat{h}:\Mod\rightarrow \Mod^*$ (the dual of $M$) be the mapping
  given by $\hat{h}(m)(n)=h(m,n)$. The metric $h$ is said to be
  \emph{invertible} if $\hat{h}$ is invertible.
\end{definition}

\noindent
Now, given a real metric calculus $\CA=(\A,\g,M,\varphi)$, we will
discuss connections on $M$ and their compatibility with the
metric. Let us start by recalling the definition of an affine
connection for a derivation based calculus.

\begin{definition}
  Let $C_{\Alg}=(\A,\g,M,\varphi)$ be a real calculus over $\Alg$. An \textit{affine
    connection} on $(\Mod,\Der)$ is a map
  $\nabla:\Der\times\Mod\rightarrow\Mod$ satisfying
  \begin{enumerate}[label=($\arabic*$)]
  \item $\nabla_{\der}(m+n)=\nabla_{\der}m+\nabla_{\der}n$,
  \item $\nabla_{\lambda\der+\der'}m=\lambda\nabla_{\der}m+\nabla_{\der'}m$,
  \item $\nabla_{\der}(ma)=(\nabla_\der m)a+m\der(a)$
  \end{enumerate}
  for $m,n\in\Mod$, $\der,\der'\in\Der$, $a\in\Alg$ and $\lambda\in\mathbb{R}$.
\end{definition}

\noindent
The fact that we shall require the connection to be ``real'' is
reflected in the following definition.

\begin{definition}
  Let $(C_{\Alg},h)$ be a real metric calculus and let $\nabla$ denote
  an affine connection on $(\Mod,\g)$. Then $(C_{\Alg},h,\nabla)$ is
  called a \emph{real connection calculus} if
  \begin{equation*}
    h\paraa{\nabla_{\der} \varphi(\d_1), \varphi(\d_2)}=h(\nabla_{\der} \varphi(\d_1), \varphi(\d_2))^*
  \end{equation*}
  for all $\der,\d_1,\d_2\in\g$.
\end{definition}

\begin{definition}
  Let $(C_{\Alg},h,\nabla)$ be a real connection calculus. We say that
  $(C_{\Alg},h,\nabla)$ is \emph{metric} if
  \begin{equation*}
    \der(h(m,n))=h(\nabla_{\der} m, n)+ h(m,\nabla_{\der} n)	
  \end{equation*}
  for all $\der\in \g$ and $m,n\in \Mod$, and \emph{torsion-free} if
  \begin{equation*}
    \nabla_{\der_1}\varphi(\der_2)-\nabla_{\der_2}\varphi(\der_1)-\varphi([\der_1,\der_2])=0
  \end{equation*}
  for all $\der_1,\der_2\in\g$. A metric and torsion-free real connection calculus is called a \emph{pseudo-Riemannian calculus}.
\end{definition}

\noindent
A connection fulfilling the requirements of a pseudo-Riemannian
calculus is called a \emph{Levi-Civita connection}. In the quite
general setup of real metric calculi, where there are few assumptions on
the structure of the algebra $\A$ and the module $M$, the existence of
a Levi-Civita connection can not be guaranteed. However, if it exists,
it is unique.

\begin{theorem}{\emph{(\cite{aw:curvature.three.sphere})}}\label{thm:Levi.Civita.connection}
  Let $(C_{\Alg},h)$ be a real metric calculus. Then there exists at
  most one affine connection $\nabla$ such that $(C_{\Alg},h,\nabla)$
  is a pseudo-Riemannian calculus.
\end{theorem}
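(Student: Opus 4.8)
The plan is to establish a noncommutative version of Koszul's formula, which pins down $h(\nablasub{\der_1}\varphi(\der_2),\varphi(\der_3))$ entirely in terms of the real metric calculus $(\CA,h)$, and then to pass from the image of $\varphi$ to all of $\Mod$ using nondegeneracy of $h$ together with the Leibniz rule.

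First I would take $\der_1,\der_2,\der_3\in\g$ and write out the metric compatibility identity $\der_i\,h(m,n)=h(\nablasub{\der_i}m,n)+h(m,\nablasub{\der_i}n)$ for the three cyclic choices, applied to the elements $\varphi(\der_1),\varphi(\der_2),\varphi(\der_3)$. Forming the combination ``(first) $+$ (second) $-$ (third)'' and repeatedly substituting the torsion-free identity $\nablasub{\der_i}\varphi(\der_j)-\nablasub{\der_j}\varphi(\der_i)=\varphi([\der_i,\der_j])$ to trade one covariant derivative for another, all covariant-derivative terms should collapse to the single term $2\,h(\nablasub{\der_1}\varphi(\der_2),\varphi(\der_3))$, leaving
\begin{align*}
  2\,h(\nablasub{\der_1}\varphi(\der_2),\varphi(\der_3))
  &= \der_1 h(\varphi(\der_2),\varphi(\der_3)) + \der_2 h(\varphi(\der_1),\varphi(\der_3)) - \der_3 h(\varphi(\der_1),\varphi(\der_2)) \\
  &\quad + h(\varphi([\der_1,\der_2]),\varphi(\der_3)) - h(\varphi([\der_1,\der_3]),\varphi(\der_2)) - h(\varphi([\der_2,\der_3]),\varphi(\der_1)),
\end{align*}
whose right-hand side depends only on $(\CA,h)$. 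The delicate point, and the one genuinely new feature compared with the classical computation, is that $h(m,n)$ and $h(n,m)$ differ by a $\ast$ in general, so the usual rearrangement only works if every factor $h(\cdot,\cdot)$ that one transposes via $h3$ is self-adjoint. This is exactly what the hypotheses provide: $h(\varphi(\der),\varphi(\der'))$ is self-adjoint because $(\CA,h)$ is a real metric calculus, $h(\nablasub{\der}\varphi(\der'),\varphi(\der''))$ is self-adjoint because $(\CA,h,\nabla)$ is a real connection calculus, and since $[\der,\der']\in\g$ both statements apply to $h(\varphi([\der,\der']),\varphi(\der''))$ as well. Carrying out this $\ast$-bookkeeping correctly is the main obstacle; everything else is formal.

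Once the Koszul formula is in place, uniqueness follows quickly. Given two Levi-Civita connections $\nabla$ and $\nabla'$ for $(\CA,h)$, subtracting their Koszul formulas yields $2\,h(\nablasub{\der_1}\varphi(\der_2)-\nabla'_{\der_1}\varphi(\der_2),\varphi(\der_3))=0$, hence (as $2$ is invertible) $h(\nablasub{\der_1}\varphi(\der_2)-\nabla'_{\der_1}\varphi(\der_2),\varphi(\der_3))=0$ for all $\der_3\in\g$. Since $h$ is additive and right $\A$-linear in its second argument and the image of $\varphi$ generates $\Mod$ over $\A$, this gives $h(\nablasub{\der_1}\varphi(\der_2)-\nabla'_{\der_1}\varphi(\der_2),m)=0$ for every $m\in\Mod$, so nondegeneracy of $h$ forces $\nablasub{\der_1}\varphi(\der_2)=\nabla'_{\der_1}\varphi(\der_2)$ for all $\der_1,\der_2\in\g$. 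Finally, writing an arbitrary $m\in\Mod$ as $m=\sum_i\varphi(\der_i)a_i$ with $\der_i\in\g$ and $a_i\in\A$, property $(3)$ of an affine connection gives $\nablasub{\der}m=\sum_i(\nablasub{\der}\varphi(\der_i))a_i+\sum_i\varphi(\der_i)\der(a_i)$, with the identical expression holding for $\nabla'$; since the two agree term by term, $\nabla=\nabla'$.
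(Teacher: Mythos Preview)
Your proposal is correct and follows precisely the approach of the cited source: you derive Koszul's formula from the metric, torsion-free, and ``real connection'' hypotheses (the latter being exactly what lets you cancel $h(E_2,\nablasub{\der_3}E_1)-h(\nablasub{\der_3}E_1,E_2)$), and then use that $\varphi(\g)$ generates $\Mod$ together with nondegeneracy of $h$ and the Leibniz rule to conclude. This is the argument indicated by the paper, which states Koszul's formula (also cited from \cite{aw:curvature.three.sphere}) immediately after the theorem.
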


\noindent
The next result provides us a noncommutative analogue of Koszul's
formula, which is a useful tool for constructing the Levi-Civita
connection in several examples.

\begin{proposition}{\emph{(\cite{aw:curvature.three.sphere})}}
	Let $(C_{\Alg},h,\nabla)$ be a pseudo-Riemannian calculus and assume that $\der_1,\der_2,\der_3\in\Der$. Then
	\begin{multline}\label{Koszul}
	2h(\nabla_{1}E_2,E_3)=\der_1h(E_2,E_3)+\der_2h(E_1,E_3)-\der_3h(E_1,E_2)\\
	-\FofG{h}{E_1,\varphi([\der_2,\der_3])}+\FofG{h}{E_2,\varphi([\der_3,\der_1])}+\FofG{h}{E_3,\varphi([\der_1,\der_2])},
	\end{multline}
	where $\nabla_i=\nabla_{\der_i}$ and $E_i=\varphi(\der_i)$ for $i=1,2,3$.
\end{proposition}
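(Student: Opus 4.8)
The plan is to imitate the classical derivation of Koszul's formula, taking care with the ordering of the arguments of $h$ since $\A$ is noncommutative. First I would invoke the metric-compatibility identity of the pseudo-Riemannian calculus,
\[
  \der_i\,h(E_j,E_k)=h(\nabla_i E_j,E_k)+h(E_j,\nabla_i E_k),
\]
for the three cyclic index choices $(i,j,k)=(1,2,3),(2,3,1),(3,1,2)$, and form the combination (first) $+$ (second) $-$ (third). On the left this produces $\der_1 h(E_2,E_3)+\der_2 h(E_3,E_1)-\der_3 h(E_1,E_2)$, and here $\der_2 h(E_3,E_1)$ may be replaced by $\der_2 h(E_1,E_3)$, since in a real metric calculus $h(E_1,E_3)=h(E_3,E_1)^\ast=h(E_3,E_1)$. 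On the right we are left with six terms, each of the form $h(\nabla_a E_b,E_c)$ or $h(E_b,\nabla_a E_c)$.

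The second step is to group these six terms into three pairs, each combining $\nabla_aE_b$ with the ``swapped'' derivative $\nabla_bE_a$: the $\nabla_1E_2$/$\nabla_2E_1$ pair, the $\nabla_1E_3$/$\nabla_3E_1$ pair, and the $\nabla_2E_3$/$\nabla_3E_2$ pair. In each pair I first move the two covariant-derivative terms into the same argument slot of $h$; this is precisely where the real connection calculus condition enters, since together with property $h3$ it gives $h(E_c,\nabla_aE_b)=h(\nabla_aE_b,E_c)^\ast=h(\nabla_aE_b,E_c)$. Then I apply the torsion-free identity $\nabla_a\varphi(\der_b)-\nabla_b\varphi(\der_a)=\varphi([\der_a,\der_b])$ together with additivity (property $h1$). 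The first pair collapses to $2h(\nabla_1E_2,E_3)-h(E_3,\varphi([\der_1,\der_2]))$, and the other two pairs to $h(E_2,\varphi([\der_1,\der_3]))$ and $h(E_1,\varphi([\der_2,\der_3]))$ respectively.

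Collecting all the pieces and solving for $2h(\nabla_1E_2,E_3)$ then yields \eqref{Koszul}, once the commutator terms are put in the stated form using $\Real$-linearity of $\varphi$ and antisymmetry of the bracket (so that $-h(E_2,\varphi([\der_1,\der_3]))=h(E_2,\varphi([\der_3,\der_1]))$). The whole argument is bookkeeping; the one genuine point of care — the ``main obstacle'', such as it is — is that the two entries of $h$ cannot be interchanged freely, so every such interchange must be justified by a hypothesis: the real metric calculus condition for the term $\der_2 h(E_3,E_1)$, and the real connection calculus condition for each of the terms $h(\nabla_aE_b,E_c)$. Since every element occurring is $h$ evaluated on elements of $\varphi(\Der)$ or their covariant derivatives, these conditions always apply, and the identity follows.
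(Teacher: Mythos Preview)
Your proof is correct and is precisely the standard derivation of Koszul's formula, adapted correctly to the noncommutative setting; you have identified exactly the places where the real metric calculus condition and the real connection calculus condition are needed to swap arguments of $h$. Note, however, that the paper does not actually prove this proposition: it is quoted from \cite{aw:curvature.three.sphere} and stated without proof, so there is no ``paper's own proof'' to compare against here. Your argument is the expected one and would serve as a complete proof.
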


\noindent
As in Riemannian geometry, a connection satisfying Koszul's formula
is torsion-free and compatible with the metric.

\begin{proposition}{\emph{(\cite{aw:curvature.three.sphere})}}\label{prop:satisfies.koszul}
  Let $(C_{\Alg},h)$ be a real metric calculus, and suppose that
  $\nabla$ is an affine connection on $(\Mod,\Der)$ such that Koszul's
  formula \emph{(\ref{Koszul})} holds. Then $(C_{\Alg},h,\nabla)$ is a
  pseudo-Riemannian calculus.
\end{proposition}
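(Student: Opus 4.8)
The plan is to check the three defining conditions of a pseudo-Riemannian calculus for $(C_{\Alg},h,\nabla)$ one at a time --- that it is a real connection calculus, that it is metric, and that it is torsion-free --- establishing each first on the generating elements $E_i:=\varphi(\der_i)$, $\der_i\in\g$, and then extending to arbitrary elements of $\Mod$ via the fact that $\varphi(\g)$ generates $\Mod$ as an $\Alg$-module together with nondegeneracy of $h$. I will use freely: that $h$ is additive in both arguments, right $\Alg$-linear in the second and conjugate-linear in the first ($h(m_1a,m_2)=a^*h(m_1,m_2)$, from $h2$ and $h3$); that the real metric calculus hypothesis together with $h3$ yields the symmetry $h(\varphi(\der),\varphi(\der'))=h(\varphi(\der'),\varphi(\der))$ for all $\der,\der'\in\g$; and that the derivations in $\g$ are hermitian, so $\der(a^*)=\der(a)^*$.

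For the real connection calculus condition, I would apply the $\ast$-operation to both sides of Koszul's formula \eqref{Koszul} written for $2h(\nabla_k E_i,E_j)$. On the three derivative terms one uses hermiticity of $\der$ together with self-adjointness of the arguments $h(E_\bullet,E_\bullet)$; on the three commutator terms one uses the symmetry above and $[\der_a,\der_b]\in\g$. One finds that the adjoint of the right-hand side is literally the right-hand side again, so $h(\nabla_k E_i,E_j)=h(\nabla_k E_i,E_j)^*$, which is exactly the real connection calculus condition for $\varphi(\der_k),\varphi(\der_i),\varphi(\der_j)$.

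For metric compatibility, adding the instances of \eqref{Koszul} for $2h(\nabla_k E_i,E_j)$ and $2h(\nabla_k E_j,E_i)$ makes the $\der_i$- and $\der_j$-derivative terms cancel, the commutator terms cancel in pairs after using $[\der_a,\der_b]=-[\der_b,\der_a]$, and the symmetry $h(E_i,E_j)=h(E_j,E_i)$ leaves $h(\nabla_k E_i,E_j)+h(\nabla_k E_j,E_i)=\der_k h(E_i,E_j)$; together with the self-adjointness $h(\nabla_k E_j,E_i)=h(E_i,\nabla_k E_j)$ just proved, this is metric compatibility on the $E_i$. For general $m=\sum_a E_{i_a}x_a$, $n=\sum_b E_{j_b}y_b$ and $\der\in\g$, I would expand $h(\nabla_\der m,n)+h(m,\nabla_\der n)$ using the Leibniz rule $\nabla_\der(E_ix)=(\nabla_\der E_i)x+E_i\der(x)$ and the (conjugate-)linearity of $h$; the terms with no derivative on the coefficients give $\sum_{a,b}x_a^*\bigl(h(\nabla_\der E_{i_a},E_{j_b})+h(E_{i_a},\nabla_\der E_{j_b})\bigr)y_b=\sum_{a,b}x_a^*\der\bigl(h(E_{i_a},E_{j_b})\bigr)y_b$, and the remaining terms, using $\der(x^*)=\der(x)^*$, reassemble so that the total is $\der\bigl(h(m,n)\bigr)$.

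For the torsion-free condition, set $T=\nabla_{\der_1}\varphi(\der_2)-\nabla_{\der_2}\varphi(\der_1)-\varphi([\der_1,\der_2])$. Because $\varphi(\g)$ generates $\Mod$ and $h$ is right $\Alg$-linear in its second slot, nondegeneracy reduces $T=0$ to the claim $h(T,\varphi(\der_3))=0$ for all $\der_3\in\g$. Subtracting the instances of \eqref{Koszul} for $2h(\nabla_1 E_2,E_3)$ and $2h(\nabla_2 E_1,E_3)$: the $\der_1$- and $\der_2$-derivative terms cancel, the $\der_3$-derivative terms cancel by $h(E_1,E_2)=h(E_2,E_1)$, and the commutator terms collapse using antisymmetry of the bracket to give $h(\nabla_1 E_2,E_3)-h(\nabla_2 E_1,E_3)=h(E_3,\varphi([\der_1,\der_2]))$; since also $h(\varphi([\der_1,\der_2]),\varphi(\der_3))=h(\varphi(\der_3),\varphi([\der_1,\der_2]))$ by symmetry, $h(T,\varphi(\der_3))=0$ follows. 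I expect the only real obstacle to be bookkeeping in the extension step for metric compatibility: one must carefully track the conjugation from $h$ being conjugate-linear in its first slot and the Leibniz corrections $E_i\der(x)$, and verify the reassembly; every other step is a direct consequence of Koszul's formula and the reality and symmetry properties listed above.
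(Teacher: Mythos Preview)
The paper does not actually prove this proposition; it is quoted from \cite{aw:curvature.three.sphere} and no argument is given here, so there is nothing in the present paper to compare your proof against. Your argument is the standard one and is correct: the reality, metric and torsion-free conditions each follow from suitable symmetric/antisymmetric combinations of Koszul's formula on the generators $E_i=\varphi(\der_i)$, and the extension to all of $M$ uses that $\varphi(\g)$ generates $M$ together with nondegeneracy of $h$. One small remark: your use of $\der(a^*)=\der(a)^*$ is indeed part of the standing assumptions in \cite{aw:curvature.three.sphere} (the derivations in $\g$ are taken to be hermitian), even though the present paper does not restate this explicitly in the definition; without it the reality step would not go through, so it is worth flagging that dependence.
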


\noindent
A particularly simple case, which is also relevant to our
applications, is when $M$ is a free module. The following result then
gives a way of constructing the Levi-Civita connection from Koszul's formula.

\begin{corollary}{\emph{(\cite{aw:curvature.three.sphere})}}\label{cor:calculating.Levi.Civita}
  Let $(C_{\Alg},h)$ be a real metric calculus and let
  $\{\der_1,...,\der_n\}$ be a basis of $\Der$ such that
  $\{E_a=\varphi(\der_a)\}_{a=1}^n$ is a basis for $\Mod$. If there
  exist $m_{ab}\in\Mod$ such that
  \begin{multline}\label
    2h(m_{ab},E_c)=\der_a h(E_b,E_c)+\der_b h(E_a,E_c)-\der_c h(E_a,E_b)\\
    -\FofG{h}{E_a,\varphi([\der_b,\der_c])}+\FofG{h}{E_b,\varphi([\der_c,\der_a])}+\FofG{h}{E_c,\varphi([\der_a,\der_b])},
  \end{multline}
  for $a,b,c=1,...,n$, then there exists an affine connection
  $\nabla$, given by $\nabla_{\der_a}E_b=m_{ab}$, such that
  $(C_{\Alg},h,\nabla)$ is a pseudo-Riemannian calculus.
\end{corollary}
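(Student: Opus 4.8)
The plan is to construct the connection explicitly from the prescribed data $\{m_{ab}\}$ and then appeal to Proposition~\ref{prop:satisfies.koszul}. First I would use the two basis assumptions: since $\{E_a\}_{a=1}^n$ is a basis of $\Mod$, every $m\in\Mod$ has a unique expansion $m=\sum_b E_b a_b$ with $a_b\in\A$; and since $\{\der_a\}_{a=1}^n$ is a basis of $\Der$, every $\der\in\Der$ has a unique expansion in terms of $\der_1,\dots,\der_n$. Setting $\nabla_{\der_a}E_b:=m_{ab}$, one extends $\nabla$ to all of $\Mod$ in the module slot by
\begin{equation*}
  \nabla_{\der_a}m:=\sum_b\bigl(m_{ab}a_b+E_b\,\der_a(a_b)\bigr),\qquad m=\sum_b E_b a_b,
\end{equation*}
and to all of $\Der$ in the derivation slot by $\Real$-linearity over $\{\der_a\}$. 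Since both expansions are unique, $\nabla$ is well defined.

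Next I would verify that $\nabla$ is an affine connection. Additivity in $\Mod$ and $\Real$-linearity in $\Der$ are built into the definition (and $\varphi$ being $\Real$-linear makes the $E_b$-expansion of $\varphi(\der)$ compatible with the construction), so only the Leibniz rule (3) needs attention: expanding $ma=\sum_b E_b(a_b a)$, applying the displayed formula, and using $\der_a(a_b a)=\der_a(a_b)a+a_b\der_a(a)$ on the algebra, one collects terms to obtain $\nabla_{\der}(ma)=(\nabla_{\der}m)a+m\,\der(a)$. This is a routine computation.

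It then remains to show that $(C_{\Alg},h,\nabla)$ is a pseudo-Riemannian calculus, for which, by Proposition~\ref{prop:satisfies.koszul}, it suffices to verify Koszul's formula~(\ref{Koszul}) for all $\der_1,\der_2,\der_3\in\Der$. By construction $m_{ab}=\nabla_{\der_a}E_b$, so the hypothesis of the corollary is precisely~(\ref{Koszul}) on the basis triples $(\der_a,\der_b,\der_c)$. The task is to propagate this identity to arbitrary triples: writing each $\der_i$ in the basis, the right-hand side of~(\ref{Koszul}) is additive and $\Real$-linear in each argument (using $\Real$-bilinearity of the bracket and of $h$, and $\Real$-linearity of $\varphi$), the terms $\varphi([\der_i,\der_j])$ expand accordingly, and the left-hand side $2h(\nabla_1 E_2,E_3)$ expands in the same manner because $\nabla$ was defined $\Real$-linearly in the derivation slot and Leibniz-compatibly in the module slot; comparing expansions gives~(\ref{Koszul}) in general, and Proposition~\ref{prop:satisfies.koszul} finishes the proof.

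I expect the propagation of Koszul's identity from basis triples to general triples to be the step needing the most care --- one must ensure that no spurious terms survive, and in particular, if $\Der$ is regarded as an $\A$-module (or a module over its center) rather than merely an $\Real$-vector space, one must check that both sides of~(\ref{Koszul}) transform identically under rescaling a $\der_i$ by an algebra element. As in the classical tensoriality check for the Koszul expression, the potentially obstructing terms cancel thanks to the hermitian symmetry $h(m,n)=h(n,m)^*$ combined with the reality conditions of a real metric calculus; granting this, the remaining verifications are immediate from the definitions.
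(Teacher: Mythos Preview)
The paper does not include a proof of this corollary; it is quoted from \cite{aw:curvature.three.sphere}. That said, your argument is the intended one: the statement is labeled a corollary of Proposition~\ref{prop:satisfies.koszul}, and your strategy---define $\nabla$ on basis pairs by $\nabla_{\der_a}E_b=m_{ab}$, extend by the Leibniz rule and $\reals$-linearity (well defined because both $\{\der_a\}$ and $\{E_a\}$ are bases), then verify Koszul's formula for general triples by $\reals$-trilinearity---is exactly how one derives it from that proposition.

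One remark on your closing caveat: in this framework $\g$ is only a real Lie algebra, not an $\A$-module (the paper stresses that ``$\g$ is no longer an $\A$-module'' in the noncommutative setting, and the connection axioms demand only $\reals$-linearity in the derivation slot). So the propagation step requires nothing more than $\reals$-trilinearity of both sides of~(\ref{Koszul}), which is immediate from the $\reals$-linearity of $\varphi$, the $\reals$-bilinearity of the bracket, and the fact that derivations annihilate real scalars (so $\nabla_\der(\lambda m)=\lambda\nabla_\der m$ for $\lambda\in\reals$). The tensoriality-style cancellation you anticipate is not needed here; the argument is cleaner than you suggest.
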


\section{Real calculus homomorphisms}

\noindent
In order to understand the algebraic structure of real calculi, a
first step is to consider morphisms. Via a concept of morphism of real
calculi, one can understand when two calculi are considered to be
equal (isomorphic) and, from a geometric point of view, what one means
by a noncommutative embedding. In this section we introduce
homomorphisms of real (metric) calculi and prove several results
which, in different ways, shed light on the new concept.

\begin{definition}\label{DefHom}
  Let $C_{\Alg}=\RCa$ and $C_{\Algprim}=\RCb$ be real calculi and
  assume that $\phi:\Alg \rightarrow \Algprim$ is a $\ast$-algebra
  homomorphism. If there is a map $\psi:\Derprim \rightarrow \Der$
  such that
  \begin{enumerate}[label=($\psi$\arabic*)]
  \item  $\psi$ is a Lie algebra homomorphism
  \item \label{Compatibility} $\delta(\phi(a))=\phi(\psi(\delta)(a))$ for all $\delta\in\Derprim, a\in\Alg$,
  \end{enumerate}
  then $\psi$ is said to be \textit{compatible} with $\phi$. If
  $\psi$ is compatible with $\phi$ we define $\Psi$ as
  $\Psi=\varphi\circ\psi$, and $\Mod_{\Psi}$ is defined to be the
  submodule of $\Mod$ generated by $\Psi(\Derprim)$.
  
  Furthermore, if there is a map
  $\ModHom:\Mod_{\Psi}\rightarrow\Modprim$ such that
  \begin{enumerate}[label=($\ModHom$\arabic*)]
  \item $\ModHom(m_1+m_2)=\ModHom(m_1)+\ModHom(m_2)$ for all
    $m_1,m_2\in\Mod$\label{It1}
  \item $\ModHom(ma)=\ModHom(m)\phi(a)$ for all $m\in\Mod$ and $a\in\Alg$\label{It2}
  \item $\ModHom(\Psi(\delta))=\varphi'(\delta)$ for all $\delta\in\Derprim$\label{It3},
  \end{enumerate}
  then $\ModHom$ is said to be \textit{compatible} with $\phi$ and
  $\psi$, and $\Hom$ is called a \textit{real calculus homomorphism}
  from $C_{\Alg}$ to $C_{\Algprim}$ (see Figure~\ref{RC-Hom2} for an
  illustration of a real calculus homomorphism). If $\phi$ is a
  $\ast$-algebra isomorphism, $\psi$ a Lie algebra isomorphism and
  $\psih$ is a bijective map then $(\phi,\psi,\psih)$ is called a real
  calculus isomorphism.
\end{definition}

\begin{figure}[h]
\begin{center}
  \begin{tikzpicture}
  \draw (0,1) node{$C_{\Alg}$};
  \draw (4,1) node{$C_{\Algprim}$};
  	\fill[fill=green!0!white, rounded corners] (-1,0.7) rectangle (1,-4.7);
    \fill[fill=green!0!white, rounded corners] (3,0.7) rectangle (5,-4.7);
  
    \filldraw[fill=black!20!white, draw=white!50!black] (0,0) ellipse [x radius=20pt, y radius=10pt];
    \draw (0,0) node{$\Alg$};
    
    \filldraw[fill=black!0!white, draw=white!0!black, rounded corners] (-0.7,-1.5) rectangle (0.7,-2.5);
    \draw (0.5,-1.3) node{$\Der$};
    \filldraw[fill=black!20!white, draw=white!50!black] (0,-2) ellipse [x radius=15pt, y radius=10pt];
    \draw (0,-2) node{$\psi(\Derprim)$};
    
    \filldraw[fill=black!20!white, draw=white!50!black] (0,-4) ellipse [x radius=20pt, y radius=10pt];
    \filldraw[fill=black!0!white, draw=white!0!black, rounded corners] (-0.7,-3.5) rectangle (0.7,-4.5);
    \draw (0.5,-3.3) node{$\Mod$};
    \filldraw[fill=black!20!white, draw=white!50!black] (0,-4) ellipse [x radius=15pt, y radius=10pt];
    \draw (0,-4) node{$\Mod_{\Psi}$};

	\filldraw[fill=black!20!white, draw=white!50!black] (4,0) ellipse [x radius=20pt, y radius=10pt];
    \draw (4,0) node{$\Algprim$};
    
    \filldraw[fill=black!20!white, draw=white!50!black] (4,-2) ellipse [x radius=20pt, y radius=10pt];
    \draw (4,-2) node{$\Derprim$};
    
    \filldraw[fill=black!20!white, draw=white!50!black] (4,-4) ellipse [x radius=20pt, y radius=10pt];
    \draw (4,-4) node{$\Modprim$};
    
    \draw[->] (0.75,0) .. controls (1.5,0.2) and (2.5,0.2) ..
    node[above=2pt] {$\phi$} (3.25,0);
    
    \draw[<-] (0.75,-2) .. controls (1.5,-1.8) and (2.5,-1.8) ..
    node[above=2pt] {$\psi$} (3.25,-2);
    
    \draw[->] (0.55,-4) .. controls (1.5,-3.8) and (2.5,-3.8) ..
    node[above=2pt] {$\ModHom$} (3.25,-4);
    
    \draw[->] (0,-2.5) .. controls (-0.1,-2.7) and (-0.1,-3.3) .. 
    node[left=1pt] {$\varphi$} (0,-3.5);
    
    \draw[->] (4,-2.4) .. controls (4.1,-2.7) and (4.1,-3.3) .. 
    node[right=1pt] {$\varphi '$} (4,-3.6);
    
    \draw[<-] (0,-0.4) .. controls (-0.1,-0.7) and (-0.1,-1.3) .. 
    (0,-1.5);

    \draw[<-] (4,-0.4) .. controls (4.1,-0.7) and (4.1,-1.3) .. 
    (4,-1.6);
  
  \end{tikzpicture}
\end{center}
\caption{A real calculus homomorphism $(\phi,\psi,\psih):C_{\A}\to C_{\A'}$.}
\label{RC-Hom2}
\end{figure}

Let us try to understand Definition~\ref{DefHom} in the context of
embeddings, where the analogy with classical geometry is rather
clear. Thus, let $\phi_0:\Sigma'\to\Sigma$ be an embedding of
$\Sigma'$ into $\Sigma$ and let
$\phi:C^\infty(\Sigma)\to\C^{\infty}(\Sigma')$ be the corresponding
homomorphism of the algebras of smooth functions. In the notation of
Definition~\ref{DefHom} we have
\begin{alignat*}{2}
  &\A = C^\infty(\Sigma) & \quad\overset{\phi}{\longrightarrow}\quad &\A'=C^\infty(\Sigma')\\
  &\g  = \DerSet(\A) & \quad\overset{\psi}{\longleftarrow}\quad &\g' = \DerSet(\A')\\
  M = &\Vect(\Sigma)\supseteq\MPsi & \quad\overset{\psih}{\longrightarrow}\quad &M'=\Vect(\Sigma').
\end{alignat*}
First of all, there is no natural map from $\Vect(\Sigma)$ to
$\Vect(\Sigma')$ since a vector field $X\in \Vect(\Sigma)$ at a
point $p\in\phi_0(\Sigma')$ might not lie in $T_p\Sigma'$ (regarded
as a subspace of $T_p\Sigma$). However, vector fields which are
tangent to $\Sigma'$ in this sense may be restricted to
$\Sigma'$. On the other hand, any vector field $X'\in\Vect(\Sigma')$
(assuming $\Sigma'$ to be closed) can be extended to a smooth vector
field $X\in\Vect(\Sigma)$ such that $X|_{\Sigma'}=X'$. In light of
the isomorphism between vector fields and derivations, it is
therefore more natural to have a map
$\psi:\DerSet(\A')\to\DerSet(\A)$, corresponding to a choice of
extension of vector fields on $\Sigma'$. The map $\psih$ then
corresponds to the restriction of vector fields on $\Sigma$ which
are tangent to $\Sigma'$. Consequently, we consider vector fields in
$\MPsi$ as extensions of vector fields on the embedded manifold.

In noncommutative geometry (in contrast to the classical case) $\g$
is no longer an $\A$-module, a difference which is captured by the
concept of a real calculus. The definition of homomorphism reflects
this fact by assuming that every derivation of $\A'$ can be
``extended'' to a derivation of $\A$ and, furthermore, that every
vector field on $\Sigma$ which is tangent to $\Sigma'$ (that is, in
the image of $\varphi\circ\psi$) can be ``restricted'' to $\Sigma'$.

Next, one can easily check that the composition of two homomorphisms
is again a homomorphism.

\begin{proposition}\label{prop:composition}
  Let $C_{\Alg}$, $C_{\Algprim}$ and $C_{\Alg''}$ be real calculi and
  assume that
  \begin{align*}
    (\phi,\psi,\psih):C_{\A}\to C_{\A'}\mathand
    (\phi',\psi',\psih'):C_{\A'}\to C_{\A''}
  \end{align*}
  are real calculus homomorphisms. Then
  $(\phi'\circ\phi,\psi\circ\psi',\psih'\circ\psih):C_\A\to\C_{\A''}$
  is a real calculus homomorphism.
\end{proposition}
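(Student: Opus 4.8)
The plan is to verify directly that the triple $(\phi'\circ\phi,\psi\circ\psi',\psih'\circ\psih)$ satisfies all the defining conditions of Definition~\ref{DefHom}, working through the three layers (algebra map, Lie algebra map, module map) in turn. First I would observe that $\phi'\circ\phi:\Alg\to\Alg''$ is a $\ast$-algebra homomorphism as a composition of such, and that $\psi\circ\psi':\Der''\to\Der$ is a Lie algebra homomorphism since both $\psi$ and $\psi'$ are. For the compatibility condition \ref{Compatibility}, given $\delta''\in\Der''$ and $a\in\Alg$, I would compute $\delta''((\phi'\circ\phi)(a))$ by first applying the compatibility of $\psi'$ with $\phi'$ (to pass $\delta''$ through $\phi'$, producing $\phi'(\psi'(\delta'')(\phi(a)))$) and then the compatibility of $\psi$ with $\phi$ inside the argument of $\phi'$, yielding $\phi'(\phi(\psi(\psi'(\delta''))(a)))=(\phi'\circ\phi)((\psi\circ\psi')(\delta'')(a))$, as required.

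Next I would address the module-level map. A small preliminary point worth making explicit is that $\Psi'':=\varphi\circ(\psi\circ\psi')$ equals $\Psi\circ\psi'$, so $\Psi''(\Der'')=\Psi(\psi'(\Der''))\subseteq\Psi(\Der')$, and hence the submodule $\Mod_{\Psi''}$ generated by $\Psi''(\Der'')$ is contained in $\Mod_{\Psi}$; this is exactly the domain on which $\psih$ is defined, so $\psih'\circ\psih$ makes sense on $\Mod_{\Psi''}$ provided $\psih(\Mod_{\Psi''})\subseteq\Mod'_{\Psi'}$. To see the latter, note that on generators $\psih(\Psi''(\delta''))=\psih(\Psi(\psi'(\delta'')))=\varphi'(\psi'(\delta''))=\Psi'(\delta'')$ by \ref{It3} for $\psih$, and since $\psih$ is additive and $\phi$-linear (\ref{It1}, \ref{It2}) it maps the submodule generated by $\Psi''(\Der'')$ into the submodule generated by $\Psi'(\Der'')\subseteq\Psi'(\Der')$, which is $\Mod'_{\Psi'}$, the domain of $\psih'$. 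Then additivity of $\psih'\circ\psih$ is immediate, and for $\phi'\circ\phi$-linearity one has $(\psih'\circ\psih)(ma)=\psih'(\psih(m)\phi(a))=\psih'(\psih(m))\phi'(\phi(a))$ using \ref{It2} twice. Finally, for \ref{It3}, $(\psih'\circ\psih)(\Psi''(\delta''))=\psih'(\Psi'(\delta''))=\varphi''(\delta'')$ by the computation above followed by \ref{It3} for $\psih'$.

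The isomorphism statement is not part of the claim, so I would not pursue it, though it follows by the same bookkeeping. I do not expect any genuine obstacle here: the proof is a routine diagram chase. The only point requiring a moment of care — and the one I would make sure to spell out — is the domain issue for the composite module map, namely checking that $\Mod_{\Psi''}\subseteq\Mod_{\Psi}$ and that $\psih$ carries $\Mod_{\Psi''}$ into $\Mod'_{\Psi'}$, since otherwise $\psih'\circ\psih$ would not be well-defined; everything else is a direct substitution using the defining properties listed in Definition~\ref{DefHom}.
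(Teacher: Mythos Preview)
Your proposal is correct and follows essentially the same direct verification as the paper: both check that $\phi'\circ\phi$ is a $\ast$-algebra homomorphism, that $\psi\circ\psi'$ is a Lie algebra homomorphism satisfying \ref{Compatibility} via the two-step computation $\delta''(\phi'(\phi(a)))=\phi'(\psi'(\delta'')(\phi(a)))=\phi'(\phi(\psi(\psi'(\delta''))(a)))$, and then verify \ref{It1}--\ref{It3} for the composite module map. In fact you are slightly more careful than the paper, which passes over the domain issue for $\psih'\circ\psih$ (that $\Mod_{\Psi''}\subseteq\Mod_{\Psi}$ and $\psih(\Mod_{\Psi''})\subseteq\Mod'_{\Psi'}$) without comment; your explicit check of this point is a worthwhile addition.
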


\begin{proof}
  For convenience, we introduce $\Phi:=\phi'\circ\phi$,
  $\tilde{\psi}:=\psi\circ\psi'$ and $\hat{\Psi}:=\psih'\circ\psih$.
  First of all, it is clear that $\Phi$ is a $\ast$-algebra homomorphism and $\tilde{\psi}$ is a
  Lie algebra homomorphism. For $a\in \Alg$ and
  $\delta\in \g''$ we get that
  \begin{equation*}
    \delta(\Phi(a))=\phi'(\psi'(\delta)(\phi(a)))=\phi'(\phi(\tilde{\psi}(\delta)(a)))=\Phi(\tilde{\psi}(\delta)(a)),
  \end{equation*}
  showing that $\Phi$ and $\tilde{\psi}$ are compatible, with
  $M_{\tilde{\Psi}}$ being the submodule of $\Mod$ generated by
  $\tilde{\psi}(\g'')$.
  Checking that $\hat{\Psi}(m+n)=\hat{\Psi}(m)+\hat{\Psi}(n)$ and
  $\hat{\Psi}(ma)=\hat{\Psi}(m)\Phi(a)$ for all
  $m,n\in M_{\tilde{\Psi}}$ and $a\in \Alg$ is trivial, and for
  $\delta\in \g''$ we get
  \begin{equation*}
    \varphi''(\delta)=\psih'(\Psi'(\delta))=\psih'(\varphi'(\psi'(\delta)))=\psih'(\psih(\Psi(\psi'(\delta))))=\hat{\Psi}(\varphi\circ\tilde{\psi}(\delta)).
  \end{equation*}
  Thus $\hat{\Psi}$ is compatible with $\Phi$ and $\tilde{\psi}$, and
  it follows that $(\Phi,\tilde{\psi},\hat{\Psi})$ is a real calculus
  homomorphism from $C_{\Alg}$ to $C_{\Alg''}$.
\end{proof}

\noindent
A homomorphism of real calculi $(\phi,\psi,\psih)$ consists of three
maps, and a natural question is what kind of freedom one has in
choosing these maps? Let us start by showing that, given $\phi$ and
$\psi$, there is at most one $\psih$ such that $(\phi,\psi,\psih)$ is a real calculus homomorphism.

\begin{proposition}\label{ModHomUnique}
  If $(\phi,\psi,\psih)$ and $(\phi,\psi,\psit)$ are real calculus
  homomorphisms from $\CA$ to $\CAp$ then $\psih=\psit$.
\end{proposition}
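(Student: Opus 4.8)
The plan is to show that both $\psih$ and $\psit$ are completely determined by their common values on a generating set of $\MPsi$. Recall that, by definition, $\MPsi$ is the right $\Alg$-submodule of $\Mod$ generated by $\Psi(\Derprim)=\varphi(\psi(\Derprim))$, and that any real calculus homomorphism whose first two components are $\phi$ and $\psi$ must satisfy \ref{It1}--\ref{It3} with the same $\Psi$.

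First I would note that, by property \ref{It3}, for every $\delta\in\Derprim$ we have $\psih(\Psi(\delta))=\varphi'(\delta)=\psit(\Psi(\delta))$, so the two maps agree on the generating set $\Psi(\Derprim)$. Next, given an arbitrary $m\in\MPsi$, write $m=\sum_{i=1}^{k}\Psi(\delta_i)a_i$ for suitable $\delta_i\in\Derprim$ and $a_i\in\Alg$ (possible precisely because these elements generate $\MPsi$ as an $\Alg$-module). Then \ref{It1} and \ref{It2} give
\[
  \psih(m)=\sum_{i=1}^{k}\psih(\Psi(\delta_i))\phi(a_i)=\sum_{i=1}^{k}\varphi'(\delta_i)\phi(a_i),
\]
and exactly the same computation applies to $\psit$, yielding the identical right-hand side. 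Hence $\psih(m)=\psit(m)$ for all $m\in\MPsi$, i.e. $\psih=\psit$.

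There is essentially no obstacle in this argument; the only subtlety worth flagging is that the expression $m=\sum_i\Psi(\delta_i)a_i$ need not be unique. This causes no trouble, however, because we are not constructing a map but merely evaluating two maps that are assumed to exist and to satisfy \ref{It1}--\ref{It3}; consequently each of them sends $m$ to $\sum_i\varphi'(\delta_i)\phi(a_i)$ regardless of the chosen representation, and these two values coincide.
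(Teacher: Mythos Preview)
Your proof is correct and follows essentially the same approach as the paper: write an arbitrary element of $\MPsi$ as a finite combination $\sum_i\Psi(\delta_i)a_i$ and use \ref{It1}--\ref{It3} to see that both $\psih$ and $\psit$ send it to $\sum_i\varphi'(\delta_i)\phi(a_i)$. Your added remark about the non-uniqueness of the representation is a helpful clarification that the paper leaves implicit.
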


\begin{proof}
  Let $m=\Psi(\delta_i)a^i$ for $\delta_i\in\g'$ and $a^i\in\A$ be an arbitrary element of $\MPsi$.
  It follows from \ref{It1}-\ref{It3} that
  \begin{align*}
    \tilde{\psi}(m)&=\tilde{\psi}(\Psi(\delta_i)a^i)=\tilde{\psi}(\Psi(\delta_i))\phi(a^i)=\varphi'(\delta_i)\phi(a^i)=\ModHom(\Psi(\delta_i))\phi(a^i)\\
                   &=\ModHom(\Psi(\delta_i)a^i)=\ModHom(m).\qedhere
  \end{align*} 
\end{proof}

\noindent
Furthermore, if $\phi$ is an isomorphism, then the next result shows
that $\psi$ is determined uniquely by $\phi$. Thus, combined with the
previous result we conclude that if $(\phi,\psi,\psih)$ is an
isomorphism of real calculi, then $\psi$ and $\psih$ are uniquely
determined by $\phi$.

\begin{proposition}\label{prop:RCIsoLemma}
  If $(\phi,\psi,\psih):\CA\to\CAp$ is a real calculus homomorphism
  such that $\phi$ is an isomorphism, then $\psi$ is a Lie algebra isomorphism
  with
  \begin{align*}
    \psi(\delta)=\phi^{-1}\circ\delta\circ\phi
  \end{align*}
  for $\delta\in\g'$. 
\end{proposition}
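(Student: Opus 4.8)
The plan is to obtain the formula straight from the compatibility condition $(\psi2)$ and then verify the remaining assertions. Since $\phi\colon\A\to\A'$ is invertible, for each $\delta\in\g'$ the map $\phi^{-1}\circ\delta\circ\phi\colon\A\to\A$ is well defined, and is in fact a derivation of $\A$ — the conjugate of the derivation $\delta$ by the algebra isomorphism $\phi$. Now $(\psi2)$ reads $\delta(\phi(a))=\phi\big(\psi(\delta)(a)\big)$ for all $a\in\A$, so applying $\phi^{-1}$ to both sides gives $\psi(\delta)(a)=\phi^{-1}\big(\delta(\phi(a))\big)$, i.e.\ $\psi(\delta)=\phi^{-1}\circ\delta\circ\phi$. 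This proves the displayed formula and, in particular, shows that $\psi$ is completely determined by $\phi$; together with Proposition~\ref{ModHomUnique} this is what makes an isomorphism of real calculi determined by its algebra component.

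The remaining claims are then easy. That $\psi$ is a Lie algebra homomorphism is $(\psi1)$ — or one reads it off from the formula, since conjugation by the algebra isomorphism $\phi$ preserves brackets, $\phi^{-1}\circ[\delta_1,\delta_2]\circ\phi=\big[\phi^{-1}\circ\delta_1\circ\phi,\phi^{-1}\circ\delta_2\circ\phi\big]$. For injectivity: if $\psi(\delta)=0$ then $\delta\circ\phi=\phi\circ\psi(\delta)=0$, and since $\phi$ is surjective this forces $\delta$ to vanish on all of $\A'$, hence $\delta=0$.

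The step that takes more care — and the one I expect to be the main obstacle — is surjectivity of $\psi$ onto $\g$. Given $\partial\in\g$, the formula leaves only one candidate preimage, namely $\delta:=\phi\circ\partial\circ\phi^{-1}$; this is a priori only a derivation of $\A'$, so the point to secure is that it lies in the \emph{prescribed} Lie algebra $\g'$. I would obtain this from the fact that $(\phi,\psi,\psih)$ being an isomorphism of real calculi makes $\psi\colon\g'\to\g$ bijective by definition, so that $\partial\mapsto\phi\circ\partial\circ\phi^{-1}$ is automatically its well-defined inverse. In short, once the hypothesis is used at the level of real calculi rather than merely of the algebras, the formula is essentially the whole proof; the derivation of the formula and the injectivity argument above use only that $\phi$ is a $\ast$-algebra isomorphism.
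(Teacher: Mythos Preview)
Your derivation of the formula $\psi(\delta)=\phi^{-1}\circ\delta\circ\phi$ from $(\psi2)$, and your injectivity argument, are both correct and agree with the paper's proof.

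The gap is in your treatment of surjectivity. You write that you would obtain $\phi\circ\partial\circ\phi^{-1}\in\g'$ ``from the fact that $(\phi,\psi,\psih)$ being an isomorphism of real calculi makes $\psi\colon\g'\to\g$ bijective by definition.'' But read the hypothesis again: the proposition assumes only that $(\phi,\psi,\psih)$ is a real calculus \emph{homomorphism} with $\phi$ a $\ast$-algebra isomorphism. It does \emph{not} assume the triple is a real calculus isomorphism in the sense of Definition~\ref{DefHom}; in particular, bijectivity of $\psi$ is precisely what is to be proved, not an assumption. So your surjectivity argument is circular.

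The paper's proof does not appeal to any extra hypothesis here: it simply sets $\tilde{\psi}(\partial)=\phi\circ\partial\circ\phi^{-1}$, regarded as a map $\g\to\g'$, and verifies by direct computation that $\psi\circ\tilde{\psi}=\mathrm{id}_{\g}$ and $\tilde{\psi}\circ\psi=\mathrm{id}_{\g'}$, then checks that $\tilde{\psi}$ preserves brackets. You have in fact put your finger on the one point the paper leaves implicit --- that $\tilde{\psi}(\partial)$ lands in the prescribed subalgebra $\g'\subseteq\operatorname{Der}(\A')$ rather than merely in $\operatorname{Der}(\A')$ --- but the paper does not resolve it by strengthening the hypothesis; it takes the candidate inverse as a map into $\g'$ and proceeds. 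Your instinct that this is the delicate step is correct, but the way you close it is not available under the stated assumptions.
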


\begin{proof}
  The formula for $\psi$ follows directly from the fact that
  $\delta(\phi(a))=\phi(\psi(\delta)(a))$ together with $\phi$ being
  an isomorphism. To prove that $\psi$ is an isomorphism, let
  $\tilde{\psi}:\Der\rightarrow\Derprim$ be given by
  $\tilde{\psi}(\der)=\phi\circ\der\circ\phi^{-1}$.  Then for any
  $\der\in\Der$ and $\delta\in\Derprim$ it follows that
  \begin{align*}
    \psi\circ\tilde{\psi}(\der)&=\phi^{-1}\circ\tilde{\psi}(\der)\circ\phi=\phi^{-1}\circ\phi\circ\der\circ\phi^{-1}\circ\phi=\der\\
    \tilde{\psi}\circ\psi(\delta)&=\phi\circ\psi(\delta)\circ\phi^{-1}=
                                   \phi\circ\phi^{-1}\circ\delta\circ\phi\circ\phi^{-1}=\delta.
  \end{align*}
  Thus $\psi$ is a bijection with inverse $\psi^{-1}=\tilde{\psi}$.
  Furthermore, $\psi^{-1}$ preserves the Lie bracket:
  \begin{align*}
    \psi^{-1}([\der_1,\der_2])&=\psi^{-1}([\psi\circ\psi^{-1}(\der_1),\psi\circ\psi^{-1}(\der_2)])=\psi^{-1}\circ\psi([\psi^{-1}(\der_1),\psi^{-1}(\der_2)])\\
                              &=[\psi^{-1}(\der_1),\psi^{-1}(\der_2)],
  \end{align*}
  proving that $\psi$ is indeed a Lie algebra isomorphism.
\end{proof} 

\noindent
Given a homomorphism $(\phi,\psi,\psih):\CA\to\CAp$, there is a
natural $\A$-module structure on $M'$ given by $m'\cdot a=m'\phi(a)$
for $m'\in M'$ and $a\in\A$. As expected, the right $\A$-modules $M$
and $M'$ are isomorphic when $(\phi,\psi,\psih)$ is an isomorphism.

\begin{proposition}\label{prop:RC.Mod.Isomorphic}
  If $(\phi,\psi,\psih):\CA\to\CAp$ is a real calculus isomorphism
  then 
  \begin{align*}
    \Mod=\Mod_{\Psi}\simeq \Modprim.
  \end{align*}
\end{proposition}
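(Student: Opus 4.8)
The plan is to split the statement into two parts and treat them separately. The first part is the equality $\Mod = \Mod_\Psi$. Since $(\phi,\psi,\psih)$ is a real calculus isomorphism, the map $\psi:\g'\to\g$ is bijective (this is part of the definition, and is also recovered via the formula $\psi(\delta)=\phi^{-1}\circ\delta\circ\phi$ of Proposition~\ref{prop:RCIsoLemma}); in particular $\psi(\g')=\g$, so $\Psi(\g')=\varphi(\psi(\g'))=\varphi(\g)$. By the defining property of a real calculus, $\varphi(\g)$ generates $\Mod$ as an $\A$-module, and since $\Mod_\Psi$ is by construction the submodule of $\Mod$ generated by $\Psi(\g')=\varphi(\g)$, we conclude $\Mod_\Psi=\Mod$.

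The second part is that $\psih$ implements an isomorphism $\Mod_\Psi\simeq\Modprim$ of right $\A$-modules, where $\Modprim$ carries the $\A$-module structure $m'\cdot a=m'\phi(a)$ introduced above. Here property \ref{It1} says precisely that $\psih$ is additive, and property \ref{It2} says that $\psih(ma)=\psih(m)\phi(a)=\psih(m)\cdot a$, so $\psih$ is a homomorphism of right $\A$-modules. As it is bijective by assumption, $\psih$ is an isomorphism of right $\A$-modules (the inverse of a bijective module homomorphism being automatically additive and compatible with the action). Combining the two parts yields $\Mod=\Mod_\Psi\simeq\Modprim$.

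I do not expect any genuine obstacle: the argument is essentially bookkeeping with the axioms \ref{It1}--\ref{It3}, the surjectivity of $\psi$, and the generating property of $\varphi$. The one point worth stressing is conceptual rather than technical, namely that the module isomorphism $\Mod\simeq\Modprim$ only makes sense after transporting the $\A'$-module structure on $\Modprim$ back to an $\A$-module structure along $\phi$, since a priori $\Mod$ and $\Modprim$ are modules over different algebras; it is exactly property \ref{It2} that makes $\psih$ respect this twisted structure.
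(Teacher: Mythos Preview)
Your proof is correct and follows essentially the same approach as the paper's own proof: first use the bijectivity of $\psi$ to conclude $\Psi(\g')=\varphi(\g)$ generates $M$, hence $M_\Psi=M$; then observe that $\psih$ is a bijective $\A$-module homomorphism (with $M'$ viewed as an $\A$-module via $\phi$). Your write-up is in fact slightly more careful than the paper's, which glosses over the distinction between $\psi(\g')$ and $\Psi(\g')=\varphi(\psi(\g'))$.
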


\begin{proof}
  Since $\psi$ is an isomorphism it follows that $\Der=\psi(\Derprim)$. From
  this it immediately follows that $\Mod=\Mod_{\Psi}$, since $\Mod_{\Psi}$ is
  defined to be the submodule of $\Mod$ generated by $\Der=\psi(\Derprim)$.
  Considering $M'$ as a right $\A$-module, $\psih$ is an $\A$-module
  homomorphism, and since $\psih$ is assumed to be bijective, we
  conclude that $\MPsi\simeq M'$.
\end{proof}

\noindent
Recalling our previous discussions of real calculus homomorphisms in
relation to embeddings, one may consider vector fields in $\MPsi$ as
extensions of vector fields in $M$. Let us therefore make the
following definition.

\begin{definition}
  If $m\in\MPsi$ such that $\psih(m)=m'$ then $m$ is called an
  \emph{extension of $m'$}. The set of extensions of $m'$ will be
  denoted by $\ExtPsi(m')$.
\end{definition}

\noindent
\subsection{Homomorphisms of real metric calculi}

\noindent Having introduced the concept of homomorphisms for real
calculi, it is natural to proceed to real metric calculi. From the
geometric point of view, in the case of embeddings, one would like a
homomorphism of real metric calculi to correspond to an isometric
embedding. The following definition is straightforward.

\begin{definition}
  Let $(\CA,h)$ and $(\CAp,h')$ be real metric calculi and assume that
  $(\phi,\psi,\psih):\CA\to\CAp$ is a real calculus homomorphism. If
  \begin{equation*}
    h'\paraa{\varphi'(\delta_1),\varphi'(\delta_2)}=\phi\paraa{
      h (\Psi (\delta_1), \Psi(\delta_2))}
  \end{equation*}
  for all $\delta_1,\delta_2\in\Derprim$ then 
  $(\phi,\psi,\psih)$ is called a \emph{real metric
    calculus homomorphism}.
\end{definition}

\noindent
Assume that $(\phi,\psi,\psih):(\CA,h)\to\CA$ is a homomorphism of
real calculi. It is natural to ask if there exists a metric $h'$
such that $(\phi,\psi,\psih):(\CA,h)\to(\CA,h')$ is a homomorphism of
real metric calculi, in which case we would call $h'$ the
\emph{induced} metric. As it turns out, one cannot guarantee the
existence of $h'$, but whenever it exists, it is unique; we state this
as follows.

\begin{proposition}\label{IndMetric}
  Let $C_{\A}$ be a real calculus, $(C_{\Alg},h)$ a real metric
  calculus, and let $(\phi,\psi,\ModHom):(C_{\A},h)\to C_{\A'}$ be a
  real calculus homomorphism. Then there exists at most one hermitian form
  $h'$ on $\Modprim$ satisfying
  \begin{equation*}
    h'(\varphi'(\delta_1),\varphi'(\delta_2))=\phi\paraa{
    h (\Psi (\delta_1), \Psi (\delta_2))},
    \quad\delta_1,\delta_2\in\Derprim.
  \end{equation*}
\end{proposition}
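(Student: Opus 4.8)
The plan is to show that the value of $h'$ on an arbitrary pair of elements of $\Modprim$ is forced by the stated condition together with the axioms $h1$--$h3$ of a hermitian form. The key observation is that, by Proposition~\ref{prop:RC.Mod.Isomorphic}'s underlying mechanism — namely that $\psih$ is surjective onto $\Modprim$ because its image contains $\varphi'(\Derprim) = \psih(\Psi(\Derprim))$ which generates $\Modprim$ as an $\A'$-module — every element of $\Modprim$ can be written as a finite sum $\sum_i \varphi'(\delta_i)\, b^i$ with $\delta_i \in \Derprim$ and $b^i \in \A'$. So suppose $h'_1$ and $h'_2$ are two hermitian forms on $\Modprim$ both satisfying the displayed identity; I want to conclude $h'_1 = h'_2$.

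First I would fix $n' = \varphi'(\delta)\, b \in \Modprim$ for a single $\delta \in \Derprim$ and $b \in \A'$, and compute $h'_k(m', n')$ for an arbitrary $m' \in \Modprim$. Using $h2$ we have $h'_k(m', n') = h'_k(m', \varphi'(\delta))\, b$, and using $h3$ we get $h'_k(m',\varphi'(\delta)) = h'_k(\varphi'(\delta), m')^*$. Now writing $m' = \sum_i \varphi'(\delta_i)\, b^i$ and applying $h1$ and $h2$ again, $h'_k(\varphi'(\delta), m') = \sum_i h'_k(\varphi'(\delta), \varphi'(\delta_i))\, b^i$. Each term $h'_k(\varphi'(\delta),\varphi'(\delta_i))$ is pinned down by the hypothesis to equal $\phi\paraa{h(\Psi(\delta),\Psi(\delta_i))}$, which does not depend on $k$. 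Hence $h'_1(\varphi'(\delta), m') = h'_2(\varphi'(\delta), m')$, and unwinding the two steps above gives $h'_1(m', n') = h'_2(m', n')$ for every such $n'$. Since a general element of $\Modprim$ is a finite sum of elements of the form $\varphi'(\delta)\, b$, another application of $h1$ (additivity in the second slot) extends this to $h'_1(m', n') = h'_2(m', n')$ for all $m', n' \in \Modprim$, which is the claim.

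I do not expect a serious obstacle here: the proof is essentially the same bookkeeping argument as in Proposition~\ref{ModHomUnique}, exploiting that the $\varphi'(\delta)$ generate $\Modprim$ and that a hermitian form is determined by its values on a generating set via sesquilinearity. The one point that needs a word of care is that the representation $m' = \sum_i \varphi'(\delta_i)\, b^i$ is not unique, so strictly speaking one should phrase the argument as: the difference $h'_1 - h'_2$ is a hermitian form (being a difference of hermitian forms) that vanishes on all pairs $(\varphi'(\delta_1), \varphi'(\delta_2))$, and any hermitian form vanishing on a pair of generating sets vanishes identically — this avoids any appeal to well-definedness of coefficients. That reformulation makes the write-up cleanest, and I would structure the final proof around it.
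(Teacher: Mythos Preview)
Your proof is correct and follows essentially the same route as the paper: assume two hermitian forms $h'_1,h'_2$ satisfy the hypothesis, write arbitrary elements of $\Modprim$ as $\A'$-linear combinations of the generators $\varphi'(\delta_i)$, and use sesquilinearity ($h1$--$h3$) to reduce $h'_k(m',n')$ to the prescribed values $h'_k(\varphi'(\delta_i),\varphi'(\delta_j))$. Your caution about non-unique representations is unnecessary here (both $h'_1$ and $h'_2$ are already well-defined maps, so the computation is valid for any chosen representation), and the fact that $\varphi'(\Derprim)$ generates $\Modprim$ is simply part of the definition of a real calculus rather than a consequence of Proposition~\ref{prop:RC.Mod.Isomorphic}.
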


\begin{proof}
  Suppose that $h_1'$ and $h_2'$ both fulfill the given conditions for
  $h'$.  By definition of real calculus homomorphism it is immediately
  obvious that $h_1'$ and $h_2'$ agree on $\varphi'(\Derprim)$. If we
  take two arbitrary elements $m,n\in\Modprim$ it follows from the
  fact that $C_{\Algprim}$ is a real calculus over $\Algprim$ that $m$
  and $n$ can be written as
  \begin{align*} m'&= \varphi'(\delta_i)a^i,\quad \delta_i\in\Derprim,
                     a^i\in\Algprim,\\ n'&= \varphi'(\delta_j)b^j,\quad
                                           \delta_j\in\Derprim, b^j\in\Algprim.
  \end{align*}
  Furthermore, one obtains
  \begin{align*}
    h_1'(m',n')&=\FofG{h_1'}{\varphi'(\delta_i)a^i,\varphi'(\delta_j)b^j}=\FofG{h_1'}{\varphi'(\delta_i)a^i,\varphi'(\delta_j)}b^j\\
               &=(a^i)^*\FofG{h_1'}{\varphi'(\delta_i),\varphi'(\delta_j)}b^j=(a^i)^*\FofG{h_2'}{\varphi'(\delta_i),\varphi'(\delta_j)}b^j\\
               &=\FofG{h_2'}{\varphi'(\delta_i)a^i,\varphi'(\delta_j)}b^j=\FofG{h_2'}{\varphi'(\delta_i)a^i,\varphi'(\delta_j)b^j}=h_2'(m',n'),
  \end{align*} since $h_1'$ and $h_2'$ are hermitian forms on $M'$ and
  $h_1'(\varphi'(\delta_i),\varphi'(\delta_j))=h_2'(\varphi'(\delta_i),\varphi'(\delta_j))$
  for $\delta_1,\delta_2\in\Derprim$. Since $m'$ and $n'$ are
  arbitrary, it follows that $h_1'=h_2'$.
\end{proof}

\noindent Note that if $(\phi,\psi,\psih):(\CA,h)\to(\CAp,h')$ is a
homomorphism of real metric calculi, then
$\phi\paraa{h(m,n)}=h'(\psih(m),\psih(n))$ for all $m,n\in\MPsi$. In other
words
\begin{align*}
  \phi\paraa{h(m,n)} = h'(m',n')
\end{align*}
if $m\in\ExtPsi(m')$ and $n\in\ExtPsi(n')$. This is to be compared
with the geometrical situation where the inner product of vector
fields restricted to the isometrically embedded manifolds equals the
inner product of the restricted vector fields.

\section{Embeddings of real calculi}

\noindent In the previous section, we highlighted the analogy with
embedded manifolds in order to motivate and understand the different
concepts introduced for noncommutative algebras. However, we did not
make the distinction between general homomorphisms and embeddings
precise. In this section we shall define noncommutative embeddings and
introduce a theory of submanifolds, much in analogy with the classical
situation. It turns out that one can readily introduce
the second fundamental form, and find a noncommutative analogue of
Gauss' equation, giving the curvature of the submanifold.

A necessary condition for a map $\phi_0:\Sigma'\to\Sigma$ to be an
embedding, is that $\phi_0$ is injective; dually, this corresponds to
$\phi:C^{\infty}(\Sigma)\to C^\infty(\Sigma')$ being surjective. To
formulate the next definition, we recall the orthogonal complement of a
module. Namely, let $(\CA,h)$ be a real metric calculus. Given any
subset $N\subseteq M$, we define $N^\perp = \{m\in M: h(m,n)=0\}$ and
note that $N^\perp$ is a $\A$-module.

\begin{definition}\label{def:embedding}
  A homomorphism of real calculi $(\phi,\psi,\psih):\CA\to\CAp$ is
  called an \emph{embedding} if $\phi$ is surjective and there exists
  a submodule $\tilde{M}\subseteq M$ such that
  $M=\MPsi\oplus\tilde{M}$. A homomorphism of real metric calculi
  $(\phi,\psi,\psih):(\CA,h)\to(\CAp,h')$ is called an \emph{isometric
    embedding} if $(\phi,\psi,\psih)$ is an embedding and
  $M=\MPsi\oplus\MPsi^\perp$.
\end{definition}

\noindent
The surjectivity of $\phi$ has immediate implications for the maps
$\psi$ and $\psih$.

\begin{proposition}\label{RCHomSurj}
  Assume that $\Hom:C_{\A}\to C_{\A'}$ is a real calculus homomorphism
  such that that $\phi$ is surjective. Then $\psi$ is injective and
  $\ModHom$ is surjective.
\end{proposition}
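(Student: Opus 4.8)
**The plan is to prove each claim separately, both by elementary manipulations using the defining properties of a real calculus homomorphism.**

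\medskip

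\noindent\textbf{Injectivity of $\psi$.} I would argue by contradiction: suppose $\delta\in\Derprim$ with $\psi(\delta)=0$. Then for every $a\in\Alg$, property \ref{Compatibility} gives $\delta(\phi(a))=\phi(\psi(\delta)(a))=\phi(0)=0$. Since $\phi$ is surjective, every element of $\Algprim$ is of the form $\phi(a)$, so $\delta$ vanishes on all of $\Algprim$, i.e. $\delta=0$ as a derivation. Hence $\ker\psi=0$ and $\psi$ is injective. This step is straightforward; the only thing to note is that surjectivity of $\phi$ is exactly what is needed to conclude ``$\delta$ vanishes on $\operatorname{Im}\phi$'' implies ``$\delta=0$''.

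\medskip

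\noindent\textbf{Surjectivity of $\ModHom$.} Here I would use that $C_{\Algprim}$ is a real calculus, so $\varphi'(\Derprim)$ generates $\Modprim$ as an $\Algprim$-module: an arbitrary $m'\in\Modprim$ can be written $m'=\varphi'(\delta_i)b^i$ with $\delta_i\in\Derprim$ and $b^i\in\Algprim$. Using surjectivity of $\phi$, choose $a^i\in\Alg$ with $\phi(a^i)=b^i$. Then $m:=\Psi(\delta_i)a^i$ lies in $\MPsi$, and applying \ref{It1}, \ref{It2} and \ref{It3} gives
\begin{equation*}
  \ModHom(m)=\ModHom\paraa{\Psi(\delta_i)a^i}=\ModHom(\Psi(\delta_i))\phi(a^i)=\varphi'(\delta_i)b^i=m'.
\end{equation*}
Thus every $m'\in\Modprim$ is in the image of $\ModHom$, which proves surjectivity.

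\medskip

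\noindent\textbf{Main obstacle.} There is no serious obstacle here — both arguments are direct unwindings of the definitions. The only point requiring a moment's care is recognizing which defining property supplies surjectivity onto $\Modprim$: it is the generating condition built into the definition of a real calculus (the image of $\varphi'$ generates $\Modprim$), combined with surjectivity of $\phi$ to lift the module coefficients. Once one writes a general element of $\Modprim$ in terms of $\varphi'(\Derprim)$, property \ref{It3} together with additivity and the $\phi$-twisted module property \ref{It1}, \ref{It2} immediately identify a preimage in $\MPsi$.
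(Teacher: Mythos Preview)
Your proof is correct and follows essentially the same approach as the paper's own proof: both arguments use the compatibility condition \ref{Compatibility} together with surjectivity of $\phi$ to show $\ker\psi=0$, and both lift a generic element $m'=\varphi'(\delta_i)b^i$ of $\Modprim$ to $\Psi(\delta_i)a^i\in\MPsi$ by choosing preimages $a^i$ of $b^i$ under $\phi$ and applying \ref{It1}--\ref{It3}.
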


\begin{proof}
  For the first statement, suppose
  $\delta\in\mbox{ker}(\psi)$. Then for any $a\in\Alg$ it follows that
  $\psi(\delta)(a)=0$. Thus, by \ref{Compatibility} it follows that
  \begin{equation*}
    \delta(\phi(a))=\phi(\psi(\delta)(a))=\phi(0)=0
  \end{equation*}
  for any $a\in\Alg$, and since $\phi$ is surjective it follows that
  $\delta(a')=0$ for every $a'\in\Algprim$.
  
  For the second statement, let $m'\in\Modprim$. Then $m'$ can be
  written on the form $m'=\varphi'(\delta_i)b^i$ for some
  $\delta_i\in\Derprim$ and $b^i\in\Algprim$, and since $\phi$ is
  surjective there are $a^i\in\Alg$ such that $\phi(a^i)=b^i$. It
  follows that
  \begin{equation*}
    m'=\varphi'(\delta_i)b^i=\ModHom(\Psi(\delta_i))\phi(a^i)=\FofG{\ModHom}{\Psi(\delta_i)a^i},
  \end{equation*}
  completing the proof.
\end{proof}

\noindent
Note that Proposition~\ref{RCHomSurj} gives further motivation for
Definition~\ref{def:embedding} since it shows that $\psi$ is
injective, in analogy with the injectivity of the tangent map of an
embedding. Moreover, it follows from Proposition~\ref{RCHomSurj} that if
$(\phi,\psi,\psih):\CA\to\CAp$ is an embedding, then every element
$m'\in M'$ has at least one extension corresponding to the geometric
situation where a vector field on the embedded manifold can be
extended to a vector field in the ambient space.

Furthermore, given an embedding $(\phi,\psi,\psih):\CA\to\CAp$, we
define the $\A$-linear projection $P:M\to\MPsi$ as
\begin{align*}
  P(m_\Psi\oplus \tilde{m}) = m_{\Psi}
\end{align*}
with respect to the decomposition $M=\MPsi\oplus\tilde{M}$. The
complementary projection will be denoted by $\Pi=\mid-P$. (Note that
for an embedding of real metric calculi, the projections $P$ and $\Pi$
are orthogonal with respect to the metric on $M$.)

In analogy with classical Riemannian submanifold theory (see
e.g. \cite{kn:foundationsDiffGeometryII}), one decomposes the
Levi-Civita connection in its tangential and normal parts. Let
$(\CA,h,\nabla)$ and $(\CAp,h',\nabla')$ be pseudo-Riemannian calculi
and assume that $(\phi,\psi,\psih):(\CA,h)\to(\CAp,h')$ is an
isometric embedding and write
\begin{align}
  &\nabla_{\psi(\delta)}m = L(\delta,m) + \alpha(\delta,m)\label{eq:gauss.eq}\\
  &\nabla_{\psi(\delta)}\xi = -A_\xi(\delta) + D_{\delta}\xi\label{eq:weingarten.eq}
\end{align}
for $\delta\in\g'$, $m\in\MPsi$ and $\xi\in\MPsi^\perp$, with
\begin{alignat*}{2}
  &L(\delta,m) = P(\nabla_{\psi(\delta)}m) &\qquad &\alpha(\delta,m) = \Pi(\nabla_{\psi(\delta)}m)\\
  &A_\xi(\delta) = -P(\nabla_{\psi(\delta)}\xi) & &D_{\delta}\xi = \Pi(\nabla_{\psi(\delta)}\xi).
\end{alignat*}

\noindent
In differential geometry, \eqref{eq:gauss.eq} is called \emph{Gauss'
  formula} and \eqref{eq:weingarten.eq} is called \emph{Weingarten's
  formula}. Furthermore, $\alpha:\g'\times \MPsi\to\MPsi^\perp$ is
called the \emph{second fundamental form} and
$A:\g'\times\MPsi^\perp\to\MPsi$ is called the \emph{Weingarten map}.
Let us start by showing that the tangential part $L(\delta,m)$ is an
extension of the Levi-Civita connection on $(\CAp,h',\nabla')$.

\begin{proposition}\label{prop:ConnectionExtension}
  If $\delta\in\g'$ and $m\in\ExtPsi(m')$ then $L(\delta,m)\in\ExtPsi(\nabla'_{\delta}m')$
\end{proposition}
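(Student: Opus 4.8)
The plan is to show that $\psih\paraa{L(\delta,m)}$ coincides with the Levi-Civita connection $\nabla'_\delta m'$ on the base calculus. Since $L(\delta,m)=P\paraa{\nabla_{\psi(\delta)}m}\in\MPsi$, the element $\psih\paraa{L(\delta,m)}$ is a well-defined element of $\Modprim$, so it suffices to identify it with $\nabla'_\delta m'$. I would do this by invoking the uniqueness of the Levi-Civita connection: I will define a candidate connection $\widetilde{\nabla}'$ on $(\Modprim,\g')$ by
\begin{equation*}
  \widetilde{\nabla}'_\delta m' := \psih\paraa{P\paraa{\nabla_{\psi(\delta)}m}},
  \qquad m\in\ExtPsi(m'),
\end{equation*}
and show that it is (i) well-defined (independent of the choice of extension $m$ and well-defined on representatives), (ii) an affine connection, and (iii) satisfies Koszul's formula \eqref{Koszul} for $(\CAp,h')$. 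Then Proposition~\ref{prop:satisfies.koszul} forces $(\CAp,h',\widetilde{\nabla}')$ to be a pseudo-Riemannian calculus, and Theorem~\ref{thm:Levi.Civita.connection} gives $\widetilde{\nabla}'=\nabla'$, which is exactly the claim.

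For step (i), if $m,n\in\ExtPsi(m')$ then $m-n\in\ker\psih$; since $\psih$ is $\A$-linear (in the sense of \ref{It2}) and $\MPsi$ is generated by $\Psi(\g')$, this reduces to checking that $\nabla_{\psi(\delta)}(m-n)$ has vanishing tangential part after applying $\psih$, which one can extract from the metric compatibility of $\nabla$ together with the fact that $h(m-n,\cdot)$ determines $\psih(m-n)$ via $\phi\paraa{h(\cdot,\cdot)}=h'(\psih(\cdot),\psih(\cdot))$ on $\MPsi$. Step (ii) is routine: additivity and $\Real$-linearity are inherited from $\nabla$, $P$ and $\psih$; for the Leibniz rule one uses \ref{It2}, the $\A$-linearity of $P$, property \ref{Compatibility} relating $\psi(\delta)$ and $\delta$, and $\phi$ being a homomorphism. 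Step (iii) is the substantive computation: starting from Koszul's formula for $\nabla$ on $(\CA,h)$ applied to $\psi(\delta_1),\psi(\delta_2),\psi(\delta_3)$, one pushes everything through $\phi$, uses that $\psi$ is a Lie algebra homomorphism so $\psi([\delta_i,\delta_j])=[\psi(\delta_i),\psi(\delta_j)]$, uses \ref{Compatibility} to convert $\psi(\delta)h(\cdots)$ into $\delta\,\phi(h(\cdots))=\delta\,h'(\cdots)$, and uses the isometry condition $h'(\varphi'(\delta_i),\varphi'(\delta_j))=\phi\paraa{h(\Psi(\delta_i),\Psi(\delta_j))}$ together with $h(P(x),E)=h(x,E)$ for $E\in\MPsi$ (orthogonality of the decomposition) to replace $h\paraa{\nabla_{\psi(\delta_1)}\Psi(\delta_2),\Psi(\delta_3)}$ by $h\paraa{L(\delta_1,\Psi(\delta_2)),\Psi(\delta_3)}$ and then by $h'\paraa{\widetilde{\nabla}'_{\delta_1}\varphi'(\delta_2),\varphi'(\delta_3)}$.

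The main obstacle I expect is step (i), the independence of the extension: it is the one place where the orthogonal splitting $M=\MPsi\oplus\MPsi^\perp$ and the isometry of the embedding must be used in an essential way, since for a non-orthogonal decomposition the tangential projection of $\nabla_{\psi(\delta)}(m-n)$ need not die under $\psih$. The cleanest route is probably to avoid choosing extensions altogether and instead verify Koszul's formula directly for the assignment $\varphi'(\delta_a)\mapsto \psih\paraa{L(\delta_a,\varphi(\psi(\delta_a)))}$ on a generating set, then appeal to Corollary~\ref{cor:calculating.Levi.Civita} (if $\Modprim$ is suitably free) or more robustly re-derive well-definedness from the Koszul identity itself, which pins down $h'(\widetilde{\nabla}'_\delta m',\cdot)$ and hence $\widetilde{\nabla}'_\delta m'$ by nondegeneracy of $h'$. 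Once well-definedness is secured, everything else is bookkeeping with the homomorphism axioms.
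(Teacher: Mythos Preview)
Your approach is correct and the core computation---pushing Koszul's formula through $\phi$ using the isometry condition, \ref{Compatibility}, and the Lie algebra homomorphism property of $\psi$---is exactly what the paper does. The difference lies in the packaging. You define a candidate connection $\widetilde{\nabla}'$ and then invoke uniqueness (Theorem~\ref{thm:Levi.Civita.connection}), which forces you to confront the well-definedness issue (your step (i)) as a separate obstacle. The paper avoids this detour entirely: it simply proves the identity
\[
  h'\paraa{\psih(L(\delta,m)),\psih(n)} = h'\paraa{\nabla'_\delta\psih(m),\psih(n)}
  \qquad\text{for all }n\in\MPsi,
\]
by showing both sides equal $\phi\paraa{h(\nabla_{\psi(\delta)}m,n)}$ (the left via orthogonality of $P$ and the isometry condition, the right via Koszul applied on both calculi and pushed through $\phi$). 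Nondegeneracy of $h'$ together with surjectivity of $\psih$ then gives $\psih(L(\delta,m))=\nabla'_\delta\psih(m)$ directly. This is precisely the ``cleanest route'' you yourself sketch at the end of your proposal, so you have already seen how to streamline your argument: drop the candidate-connection framing and go straight for the metric identity.
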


\begin{proof}
  For the sake of readability, let us first establish some notation.  Let
  $\delta_i\in \g'$ and let
  $\der_i=\psi(\delta_i), E_i=\Psi(\delta_i)$ and
  $E_i'=\varphi'(\delta_i)$. Moreover, let $h_{ij}=h(E_i,E_j)$ and let
  $h_{i,[j,k]}=h(E_i,\Psi([\delta_j,\delta_k]))$; likewise, let
  $h'_{ij}=h(E_i',E_j')$ and
  $h'_{i,[j,k]}=h(E_i',\varphi'([\delta_j,\delta_k]))$.
	
  With this notation in place, Koszul's formula yields
  \begin{align*}
    2h(\nabla_iE_j,E_k)&=\der_i h_{jk}+\der_j h_{ik}-\der_k h_{ij}-h_{i,[j,k]}+h_{j,[k,i]}+h_{k,[i,j]}\\
    2h'(\nabla'_iE_j',E_k')&=\delta_i h'_{jk}+\delta_j h'_{ik}-\delta_k h'_{ij}-h'_{i,[j,k]}+h'_{j,[k,i]}+h'_{k,[i,j]}
  \end{align*}
  for all $\delta_i,\delta_j,\delta_k\in \g'$, and since $h'$ is
  induced from $h$ it follows that
  \begin{align*}
    &h'_{jk}=\phi(h_{jk})\\
    &h'_{i,[j,k]}=\phi(h_{i,[j,k]})\\
    &\delta_i h'_{jk}=\delta_i\phi(h_{jk})=\phi(\der_i(h_{jk}));
  \end{align*}
  from this it becomes clear that
  $h'(\nabla'_iE_j',E_k')=\phi(h(\nabla_iE_j,E_k))$.  Let
  $m=E_ia^i\in\MPsi$ and $n=E_kb^k\in\MPsi$ be arbitrary elements in
  $\MPsi$, where $a^i, b^k\in \Alg$. By definition of affine
  connections it follows that
  \begin{align*}
    h(\nabla_j m,n)&=h(\nabla_j(E_ia^i),E_kb^k)=h((\nabla_jE_i)a^i,E_kb^k)+h(E_i\der_j(a^i),E_kb^k)\\
                   &=(a^i)^*h(\nabla_jE_i,E_k)b^k+\der_j(a^i)^*h_{ik}b^k,
  \end{align*}
  and we get
  \begin{align*}
    \phi(h(\nabla_j m,n))&=\phi(a^i)^*h'(\nabla'_jE'_i,E'_k)\phi(b^k)+\phi(\der_j(a^i)^*)h'_{ik}\phi(b^k)\\
                         &=\phi(a^i)^*h'(\nabla'_jE'_i,E'_k)\phi(b^k)+\delta_j(\phi(a^i)^*)h'_{ik}\phi(b^k)\\
                         &=h'((\nabla'_jE'_i)\phi(a^i),E'_k\phi(b^k))+h'(E'_i\delta_j(\phi(a^i)),E'_k\phi(b^k))\\
                         &=h'(\nabla'_j(E'_i\phi(a^i)),E'_k\phi(b^k))=h'(\nabla'_j(\psih(m)),\psih(n)).
  \end{align*}
  It now follows that
  \begin{equation*}
    h'(\nabla'_j(\psih(m)),\psih(n))=\phi(h(\nabla_j m,n))=\phi(h(P(\nabla_j m),n))=\phi(h(L(\delta_j,m),n)),
  \end{equation*}
  which equals $h'(\psih(L(\delta_j,m)),\psih(n))$. Thus,
  \begin{equation*}
    h'(\nabla'_j(\psih(m)),\psih(n))=h'(\psih(L(\delta_j,m)),\psih(n)),
  \end{equation*}
  and since $h'$ is non-degenerate and $\psih$ is surjective, it
  follows that $\psih(L(\delta_j,m))=\nabla'_j \psih(m)$ which is
  equivalent to $L(\delta_j,m)\in \ExtPsi(\nabla'_j \psih(m))$, and it
  immediately follows that if $m\in \ExtPsi(m')$ then
  $L(\delta,m)\in\ExtPsi(\nabla_{\delta}m')$ for any $\delta\in \g'$
  and $m'\in\Modprim$.
\end{proof}

\noindent In view of the above result, we introduce the notation
$L(\delta,m)=\nablah'_\delta m$ and conclude that
\begin{align*}
  \nabla'_\delta m' = \psih\paraa{\nablah'_\delta m} =\psih\paraa{P(\nabla_{\psi(\delta)}m)}
\end{align*}
if $m\in\ExtPsi(m')$, giving a convenient way of retrieving the
Levi-Civita connection $\nabla'$ from $\nabla$.  Next, let us show
that the second fundamental form shares the properties of its
classical counterpart.

\begin{proposition}
  If $\delta_1,\delta_2\in\g'$, $a_1,a_2\in \Alg$ and $\lambda_1,\lambda_2\in\reals$ then
  \begin{align*}
    &\alpha\paraa{\delta_1,\Psi(\delta_2)}=\alpha\paraa{\delta_2,\Psi(\delta_1)}\\
    &\alpha\paraa{\lambda_1\delta_1+\lambda_2\delta_2,m_1}
      = \lambda_1\alpha(\delta_1,m_1)+\lambda_2\alpha(\delta_2,m_1)\\
    &\alpha(\delta_1,m_1a_1+m_2a_2)
      = \alpha(\delta_1,m_1)a_1+\alpha(\delta_1,m_2)a_2
  \end{align*}
  for $m_1,m_2\in\MPsi$.
\end{proposition}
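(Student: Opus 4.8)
The plan is to obtain each of the three identities by pushing the corresponding property of the ambient connection $\nabla$ through the projection $\Pi=\mid-P$. The two facts used throughout are: (i) $\Pi$ is $\A$-linear, being the complement of the $\A$-linear projection $P:M\to\MPsi$, so in particular $\Pi$ is additive, $\reals$-linear and satisfies $\Pi(ma)=\Pi(m)a$; and (ii) $\MPsi=\ker\Pi$.

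For the symmetry $\alpha(\delta_1,\Psi(\delta_2))=\alpha(\delta_2,\Psi(\delta_1))$, I would invoke the torsion-free condition for $\nabla$ applied to the derivations $\psi(\delta_1),\psi(\delta_2)\in\g$, namely
\begin{align*}
  \nabla_{\psi(\delta_1)}\varphi(\psi(\delta_2))-\nabla_{\psi(\delta_2)}\varphi(\psi(\delta_1))=\varphi\paraa{[\psi(\delta_1),\psi(\delta_2)]}.
\end{align*}
Since $\psi$ is a Lie algebra homomorphism, the right-hand side equals $\varphi(\psi([\delta_1,\delta_2]))=\Psi([\delta_1,\delta_2])\in\MPsi$, while, because $\varphi\circ\psi=\Psi$, the left-hand side is $\nabla_{\psi(\delta_1)}\Psi(\delta_2)-\nabla_{\psi(\delta_2)}\Psi(\delta_1)$. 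Applying $\Pi$ and using $\MPsi=\ker\Pi$ then gives the claim.

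For $\reals$-linearity in the first slot I would use that $\psi$, being a Lie algebra homomorphism, is $\reals$-linear, so $\psi(\lambda_1\delta_1+\lambda_2\delta_2)=\lambda_1\psi(\delta_1)+\lambda_2\psi(\delta_2)$; the identity then follows from property (2) in the definition of an affine connection together with the $\reals$-linearity of $\Pi$. For the last identity, additivity in the second argument is immediate from property (1) of $\nabla$ and additivity of $\Pi$, so it remains to check $\alpha(\delta_1,ma)=\alpha(\delta_1,m)a$ for $m\in\MPsi$, $a\in\Alg$. Here I would apply the Leibniz rule $\nabla_{\psi(\delta_1)}(ma)=(\nabla_{\psi(\delta_1)}m)a+m\,\psi(\delta_1)(a)$; since $m\in\MPsi$ and $\MPsi$ is a submodule, $m\,\psi(\delta_1)(a)\in\MPsi=\ker\Pi$, so $\Pi$ annihilates it, and $\A$-linearity of $\Pi$ turns $\Pi((\nabla_{\psi(\delta_1)}m)a)$ into $\Pi(\nabla_{\psi(\delta_1)}m)a=\alpha(\delta_1,m)a$.

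I do not expect a genuine obstacle in any of the steps; they are routine once the two facts (i)--(ii) are in place. The only point that requires a moment's care is the symmetry identity: one must use that torsion-freeness is a statement about $\varphi$ of derivations — hence about elements of $\MPsi$ — rather than about arbitrary elements of $M$, since it is precisely this that forces the difference $\nabla_{\psi(\delta_1)}\Psi(\delta_2)-\nabla_{\psi(\delta_2)}\Psi(\delta_1)$ to lie in $\MPsi$ and thus to be killed by $\Pi$.
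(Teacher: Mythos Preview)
Your proposal is correct and follows essentially the same approach as the paper: both use torsion-freeness of $\nabla$ together with the fact that $\Psi([\delta_1,\delta_2])\in\MPsi$ for the symmetry, and both derive the linearity statements from the corresponding properties of $\nabla$ and the $\A$-linearity of the projection. The only cosmetic difference is that you work directly with $\Pi$ and $\ker\Pi=\MPsi$, whereas the paper expands $\Pi=\One-P$ and cancels the $m\,\psi(\delta_1)(a)$ term explicitly.
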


\begin{proof}
  For the first statement, let
  $\Delta(\delta_1,\delta_2)=\alpha(\delta_1,\Psi(\delta_2))-\alpha(\delta_2,\Psi(\delta_1))$. With
  this notation in place one may use the fact that $\nabla$ is
  torsion-free to get:
  \begin{align*}
    0&=\nabla_{\psi(\delta_1)} \Psi(\delta_2)-\nabla_{\psi(\delta_2)} \Psi(\delta_1)-\varphi([\psi(\delta_1),\psi(\delta_2)])\\
     &=\nabla_{\psi(\delta_1)} \Psi(\delta_2)-\nabla_{\psi(\delta_2)} \Psi(\delta_1)-\Psi([(\delta_1),(\delta_2)])\\
     &=P(\nabla_{\psi(\delta_1)} \Psi(\delta_2))-P(\nabla_{\psi(\delta_2)} \Psi(\delta_1))-\Psi([\delta_1,\delta_2])+\Delta(\delta_1,\delta_2),
  \end{align*}
  and since the projection $P$ is linear, together with the fact that
  $P(\Psi([\delta_1,\delta_2]))=\Psi([\delta_1,\delta_2])\in \MPsi$,
  it follows that
  \begin{align*}
    0=P(\nabla_{\psi(\delta_1)} &\Psi(\delta_2)-\nabla_{\psi(\delta_2)} \Psi(\delta_1)-\Psi([\delta_1,\delta_2]))+\Delta(\delta_1,\delta_2)\\
    &=P(0)+\Delta(\delta_1,\delta_2)=0+\Delta(\delta_1,\delta_2)=\Delta(\delta_1,\delta_2).
  \end{align*}
  For the second and third statements we use the linearity of the
  connection:
  \begin{align*}
    \alpha\paraa{\lambda_1\delta_1+\lambda_2\delta_2,m_1}&=(\One-P)\paraa{\nabla_{\psi(\lambda_1\delta_1+\lambda_2\delta_2)} m_1}\\
                                                         &=(\One-P)\paraa{\lambda_1\nabla_{\psi(\delta_1)} m_1+\lambda_2\nabla_{\psi(\delta_2)} m_1}\\
                                                         &=\lambda_1\alpha\paraa{\delta_1,m_1}+\lambda_2\alpha\paraa{\delta_2,m_1}
  \end{align*}
  and
  \begin{align*}
    \alpha(\delta_1,m_1a_1+m_2a_2)&=(\One-P)\para{\nabla_{\delta_1} (m_1a_1+m_2a_2)}\\
                                  &=(\One-P)\para{\nabla_{\delta_1} (m_1a_1)+\nabla_{\delta_1}(m_2a_2)}\\
                                  &=\alpha(\delta_1,m_1a_1)+\alpha(\delta_1,m_2a_2).
  \end{align*}
  Noting that
  \begin{align*}
    \alpha(\delta_1,m_1a_1)&=\nabla_{\psi(\delta_1)} m_1a_1 - P(\nabla_{\psi(\delta_1)} m_1a_1)\\
                           &=(\nabla_{\psi(\delta_1)} m_1)a_1 +m_1\psi(\delta_1)(a_1) - P\paraa{(\nabla_{\psi(\delta_1)} m_1)a_1 +m_1\psi(\delta_1)(a_1)}\\
                           &=(\nabla_{\psi(\delta_1)} m_1)a_1 +m_1\psi(\delta_1)(a_1) - P(\nabla_{\psi(\delta_1)} m_1)a_1 -m_1\psi(\delta_1)(a_1)\\
                           &=(\nabla_{\psi(\delta_1)} m_1 - P(\nabla_{\psi(\delta)} m_1))a_1=\alpha(\delta_1,m_1)a_1
  \end{align*}
  and (similarly) that $\alpha(\delta_1,m_2a_2)=\alpha(\delta_1,m_2)a_2$ the proposition now follows.
\end{proof}

\begin{proposition}
  If $\delta\in\g'$, $m\in\MPsi$ and $\xi\in\MPsi^\perp$ then
  \begin{align*}
    h\paraa{A_\xi(\delta),m} = h\paraa{\xi,\alpha(\delta,m)}.
  \end{align*}
\end{proposition}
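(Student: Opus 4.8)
The plan is to differentiate the orthogonality relation $h(m,\xi)=0$ along the derivation $\psi(\delta)$ and then split the resulting identity using Gauss' and Weingarten's formulas \eqref{eq:gauss.eq}--\eqref{eq:weingarten.eq}.

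First I would record that, since $m\in\MPsi$ and $\xi\in\MPsi^\perp$, the definition of the orthogonal complement gives $h(\xi,m)=0$, and hence also $h(m,\xi)=h(\xi,m)^*=0$. Because $(\CA,h,\nabla)$ is a pseudo-Riemannian calculus it is in particular metric, so applying the derivation $\psi(\delta)\in\g$ yields
\begin{equation*}
  0=\psi(\delta)\paraa{h(m,\xi)}=h\paraa{\nabla_{\psi(\delta)}m,\xi}+h\paraa{m,\nabla_{\psi(\delta)}\xi}.
\end{equation*}
Next I would substitute $\nabla_{\psi(\delta)}m=L(\delta,m)+\alpha(\delta,m)$ and $\nabla_{\psi(\delta)}\xi=-A_\xi(\delta)+D_\delta\xi$. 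Since the embedding is isometric, $L(\delta,m)\in\MPsi$ is orthogonal to $\xi$ and $D_\delta\xi\in\MPsi^\perp$ is orthogonal to $m$; after rewriting these inner products with the hermitian property $h(m_1,m_2)=h(m_2,m_1)^*$ so that they fall under the defining condition of $\MPsi^\perp$, both terms drop out and we are left with
\begin{equation*}
  0=h\paraa{\alpha(\delta,m),\xi}-h\paraa{m,A_\xi(\delta)}.
\end{equation*}
Finally, taking adjoints of this identity and again using hermiticity gives $h(A_\xi(\delta),m)=h(\xi,\alpha(\delta,m))$, as claimed.

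This is essentially the classical computation and presents no real obstacle; the only point that needs attention is the bookkeeping with the $\ast$-operation, since $\MPsi^\perp$ is defined through $h(\xi,\,\cdot\,)=0$ rather than $h(\,\cdot\,,\xi)=0$, so each vanishing inner product must be flipped via the hermitian symmetry of $h$ before it can be discarded. One should also verify that $\alpha(\delta,m)$ and $D_\delta\xi$ indeed lie in $\MPsi^\perp$ while $A_\xi(\delta),L(\delta,m)$ lie in $\MPsi$, which is immediate from the definitions of the orthogonal projections $P$ and $\Pi$ for an isometric embedding.
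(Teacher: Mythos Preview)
Your proof is correct and follows essentially the same approach as the paper: differentiate the orthogonality relation $h(m,\xi)=0$ using the metric compatibility of $\nabla$, then use the orthogonal decomposition $M=\MPsi\oplus\MPsi^\perp$ to discard the cross terms. The paper streamlines the computation slightly by differentiating $h(\xi,m)$ rather than $h(m,\xi)$, which places the terms in the desired order from the start and avoids the final adjoint step, but this is a cosmetic difference only.
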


\begin{proof}
  Since $h(m,\xi)=0$ one can use that $(C_{\Alg},h,\nabla)$ is metric
  to see that
  $0=\psi(\delta)(h(m,\xi))=
  h(\nabla_{\psi(\delta)}\xi,m)+h(\xi,\nabla_{\psi(\delta)}m)$. Using
  that $P$ is an orthogonal projection, it follows that
  \begin{align*}
    h(A_\xi(\delta),m)&=-h(P(\nabla_{\psi(\delta)}\xi),m)\\
                      &=-h(\nabla_{\psi(\delta)}\xi,m)=h(\xi,\nabla_{\psi(\delta)}m)=h\paraa{\xi,\alpha(\delta,m)}
  \end{align*}
  as desired.
\end{proof}

\noindent
Having considered properties of $L$, $\alpha$ and $A_\xi$, let us now
show that $D_X$ has the properties of an affine connection; in
differential geometry, $D_X$ is usually identified with a connection
on the normal bundle of the submanifold.

\begin{proposition}
  If $\delta_1,\delta_2\in \g'$, $\xi_1,\xi_2\in\MPsi^{\perp}$, $\lambda\in \reals$ and $a\in \Alg$ then
  \begin{enumerate}[label=\emph{($\arabic*$)}]
  \item $D_{\delta_1}(\xi_1+\xi_2)=D_{\delta_1}\xi_1+D_{\delta_1}\xi_1$,
  \item $D_{\lambda\delta_1+\delta_2}\xi_1=\lambda D_{\delta_1}\xi_1+D_{\delta_2}\xi_1$,
  \item $D_{\delta_1}(\xi_1 a)=(D_{\delta_1}\xi_1)a+\xi_1\psi(\delta_1)(a)$.
  \end{enumerate}
\end{proposition}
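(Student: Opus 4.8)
The plan is to derive all three properties directly from the definition $D_\delta\xi = \Pi(\nabla_{\psi(\delta)}\xi)$ together with the corresponding properties of the affine connection $\nabla$ on $(\Mod,\g)$ and the $\A$-linearity of the projection $\Pi = \mid - P$. Since $\Pi$ is $\A$-linear and $\reals$-linear (it is a module projection), it commutes with finite sums and with right multiplication by elements of $\A$, and this is what makes the argument essentially mechanical.

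First, for property (1): apply $\Pi$ to $\nabla_{\psi(\delta_1)}(\xi_1+\xi_2) = \nabla_{\psi(\delta_1)}\xi_1 + \nabla_{\psi(\delta_1)}\xi_2$, using additivity of $\nabla$ in the module slot, and then additivity of $\Pi$. Second, for property (2): use that $\psi$ is $\reals$-linear (indeed a Lie algebra homomorphism), so $\psi(\lambda\delta_1+\delta_2) = \lambda\psi(\delta_1)+\psi(\delta_2)$, and then use property (2) of the affine connection $\nabla$ to pull the scalar and the sum out, followed by linearity of $\Pi$. Third, for property (3): write $\nabla_{\psi(\delta_1)}(\xi_1 a) = (\nabla_{\psi(\delta_1)}\xi_1)a + \xi_1\,\psi(\delta_1)(a)$ by the Leibniz rule (property (3)) for $\nabla$; then apply $\Pi$ and use its $\A$-linearity to get $\Pi\big((\nabla_{\psi(\delta_1)}\xi_1)a\big) = \big(\Pi(\nabla_{\psi(\delta_1)}\xi_1)\big)a = (D_{\delta_1}\xi_1)a$. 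The only subtle point is the second term: we need $\Pi(\xi_1\,\psi(\delta_1)(a)) = \xi_1\,\psi(\delta_1)(a)$, i.e.\ that $\xi_1 b \in \MPsi^\perp$ for any $b\in\A$ whenever $\xi_1\in\MPsi^\perp$; this holds because $\MPsi^\perp$ is an $\A$-module (as noted just before Definition~\ref{def:embedding}), so $\Pi$ fixes it. This mirrors exactly the computation already carried out for $\alpha(\delta_1,m_1 a_1)$ in the previous proposition, where the analogous term $m_1\psi(\delta_1)(a_1)$ was handled via $P$; here the roles of $P$ and $\Pi$ are interchanged.

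There is no real obstacle here — the statement is a routine verification, and the main thing to get right is simply which projection fixes which summand. One should be mildly careful that the computation in (3) genuinely uses $\Pi$'s action on \emph{both} $\MPsi^\perp$ (fixing it) and on the general module element $(\nabla_{\psi(\delta_1)}\xi_1)a$ (commuting past the $a$), but both facts are immediate from $\Pi$ being an $\A$-linear idempotent with image $\MPsi^\perp$. I would present the three verifications as short displayed computations, in the order (1), (2), (3), closely paralleling the structure of the proof of the preceding proposition on $\alpha$.
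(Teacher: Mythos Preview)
Your proposal is correct and follows exactly the same approach as the paper: (1) and (2) are dismissed as immediate from the linearity of $\nabla$ (and of $\psi$ and $\Pi$), and (3) is a direct computation using the Leibniz rule for $\nabla$ together with the $\A$-linearity of $\Pi$ and the fact that $\xi_1\,\psi(\delta_1)(a)\in\MPsi^\perp$. If anything, you are slightly more explicit than the paper about why $\Pi$ fixes $\xi_1\,\psi(\delta_1)(a)$.
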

\begin{proof}
  Note that (1) and (2) follows immediately from the linearity of
  $\nabla$.  To prove (3), one computes the left-hand side directly:
  \begin{align*}
    D_{\delta_1}(\xi_1 a)&=\Pi(\nabla_{\psi(\delta_1)} \xi_1 a)=\Pi((\nabla_{\psi(\delta_1)} \xi_1)a+\xi_1\psi(\delta_1)(a))\\
                         &=\Pi((\nabla_{\psi(\delta_1)} \xi_1)a)+\Pi(\xi_1\psi(\delta_1)(a))=(D_{\delta_1}\xi_1)a+\xi_1\psi(\delta_1)(a),
  \end{align*}
  giving the desired result.
\end{proof}

\noindent
A classical formula in Riemannian geometry is Gauss' equation, which
relates the curvature of the ambient space to the curvature of the
submanifold. The next result provides a noncommutative analogue.

\begin{proposition}[Gauss' equation]
  Let $\delta_i\in\g'$, $\d_i=\psi(\delta_i)\in\g$,
  $E_i=\Psi(\delta_i)\in\MPsi$ and $E_i'=\varphi'(\delta_i)\in M'$ for
  $i=1,2,3,4$ (i.e. $E_i$ is an extension of $E_i'$). Then
  \begin{equation}
    \begin{split}
      \phi\paraa{h(E_1,R(\d_3,\d_4)E_2)}
      =h'&\paraa{E_1',R'(\delta_3,\delta_4)E_2'}
      +\phi\para{h\paraa{\alpha(\delta_4,E_1),\alpha(\delta_3,E_2)}}\\
      &-\phi\para{h\paraa{\alpha(\delta_3,E_1),\alpha(\delta_4,E_2)}}.
    \end{split}
  \end{equation}
\end{proposition}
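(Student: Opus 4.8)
The plan is to mimic the classical derivation of Gauss' equation: expand the ambient curvature
\[
R(\d_3,\d_4)E_2 = \nabla_3\nabla_4 E_2 - \nabla_4\nabla_3 E_2 - \nabla_{[\d_3,\d_4]}E_2,
\]
decompose every covariant derivative into its $\MPsi$- and $\MPsip$-components via Gauss' formula \eqref{eq:gauss.eq} and Weingarten's formula \eqref{eq:weingarten.eq}, pair the outcome with $E_1\in\MPsi$ so that all normal contributions drop out, and finally apply the $\ast$-homomorphism $\phi$ and recognise the surviving terms. Recall here that $L(\delta,m)=\nablah'_\delta m$.

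First I would write $\nabla_4 E_2 = \nablah'_{\delta_4}E_2 + \alpha(\delta_4,E_2)$, with the first summand in $\MPsi$ and the second in $\MPsip$. Applying $\nabla_3$ and using \eqref{eq:gauss.eq} on the tangential part and \eqref{eq:weingarten.eq} on the normal part gives
\[
\nabla_3\nabla_4 E_2 = \Big(\nablah'_{\delta_3}\nablah'_{\delta_4}E_2 - A_{\alpha(\delta_4,E_2)}(\delta_3)\Big) + \Big(\alpha\paraa{\delta_3,\nablah'_{\delta_4}E_2} + D_{\delta_3}\alpha(\delta_4,E_2)\Big),
\]
where the first bracket lies in $\MPsi$ and the second in $\MPsip$; this makes sense since $L(\delta,\cdot)=P\circ\nabla_{\psi(\delta)}$ maps $\MPsi$ into $\MPsi$. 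Subtracting the same expression with $3$ and $4$ interchanged, and using that $\psi$ is a Lie algebra homomorphism so that $[\d_3,\d_4]=\psi([\delta_3,\delta_4])$ and hence $\nabla_{[\d_3,\d_4]}E_2 = \nablah'_{[\delta_3,\delta_4]}E_2 + \alpha([\delta_3,\delta_4],E_2)$, I find that the $\MPsi$-component of $R(\d_3,\d_4)E_2$ is
\[
\hat R'(\delta_3,\delta_4)E_2 - A_{\alpha(\delta_4,E_2)}(\delta_3) + A_{\alpha(\delta_3,E_2)}(\delta_4),
\qquad
\hat R'(\delta_3,\delta_4)E_2 := \big(\nablah'_{\delta_3}\nablah'_{\delta_4} - \nablah'_{\delta_4}\nablah'_{\delta_3} - \nablah'_{[\delta_3,\delta_4]}\big)E_2 .
\]

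Next, pairing with $E_1\in\MPsi$ annihilates every $\MPsip$-term, so
\[
h\paraa{E_1,R(\d_3,\d_4)E_2} = h\paraa{E_1,\hat R'(\delta_3,\delta_4)E_2} - h\paraa{E_1,A_{\alpha(\delta_4,E_2)}(\delta_3)} + h\paraa{E_1,A_{\alpha(\delta_3,E_2)}(\delta_4)} .
\]
Applying the preceding proposition $h(A_\xi(\delta),m)=h(\xi,\alpha(\delta,m))$ together with hermiticity $h(m,n)=h(n,m)^*$ yields $h(E_1,A_\xi(\delta)) = h(\alpha(\delta,E_1),\xi)$, so the two Weingarten terms become $-h(\alpha(\delta_3,E_1),\alpha(\delta_4,E_2))+h(\alpha(\delta_4,E_1),\alpha(\delta_3,E_2))$. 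It remains to apply $\phi$. For the intrinsic term, iterating Proposition~\ref{prop:ConnectionExtension}: since $\nablah'_{\delta_4}E_2=L(\delta_4,E_2)\in\ExtPsi(\nabla'_{\delta_4}E_2')$, a second application gives $\nablah'_{\delta_3}\nablah'_{\delta_4}E_2\in\ExtPsi(\nabla'_{\delta_3}\nabla'_{\delta_4}E_2')$, and similarly for the remaining two terms; linearity of $\psih$ then gives $\hat R'(\delta_3,\delta_4)E_2\in\ExtPsi(R'(\delta_3,\delta_4)E_2')$. Since also $E_1\in\ExtPsi(E_1')$, the relation $\phi(h(m,n))=h'(\psih(m),\psih(n))$ noted after Proposition~\ref{IndMetric} gives $\phi(h(E_1,\hat R'(\delta_3,\delta_4)E_2))=h'(E_1',R'(\delta_3,\delta_4)E_2')$. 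Applying $\phi$ to the two remaining terms produces exactly the stated identity.

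The work requiring care is the bookkeeping of the decompositions — in particular verifying that the normal pieces $\alpha(\delta_3,\nablah'_{\delta_4}E_2)$, $D_{\delta_3}\alpha(\delta_4,E_2)$, $\alpha([\delta_3,\delta_4],E_2)$ (and their $3\leftrightarrow4$ counterparts) are indeed orthogonal to $E_1$, and the placement of $\ast$ in $h(E_1,A_\xi(\delta))=h(\alpha(\delta,E_1),\xi)$. Beyond that the argument is a direct transcription of the classical proof, the only structural input being Proposition~\ref{prop:ConnectionExtension}, used twice, to push $\phi$ through the intrinsic curvature term.
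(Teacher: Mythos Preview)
Your argument is correct and follows essentially the same route as the paper's proof: both decompose the ambient curvature via Gauss' formula, identify the tangential piece with an extension of $R'(\delta_3,\delta_4)E_2'$ by iterating Proposition~\ref{prop:ConnectionExtension}, and convert the cross terms into second fundamental forms. The only cosmetic difference is that you invoke Weingarten's formula and the identity $h(A_\xi(\delta),m)=h(\xi,\alpha(\delta,m))$ explicitly, whereas the paper leaves the normal terms as $\nabla_i\alpha(\delta_j,E_2)$ and handles them directly via metric compatibility; these are equivalent manipulations.
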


\begin{proof}
  Using the result from Proposition~\ref{prop:ConnectionExtension} one gets that
  \begin{align*}
    R'(\delta_3,\delta_4)E'_2
    &=\nabla'_3\nabla'_4 E_2'-\nabla'_4\nabla'_3 E_2'-\nabla'_{[\delta_3,\delta_4]} E_2'\\
    &=\nabla'_3\psih(\hat{\nabla}'_4 E_2)-\nabla'_4\psih(\hat{\nabla}'_3 E_2)-\psih(\hat{\nabla}'_{[\delta_3,\delta_4]} E_2)\\
    &=\psih\para{\hat{\nabla}'_3\hat{\nabla}'_4 E_2-\hat{\nabla}'_4\hat{\nabla}'_3 E_2-\hat{\nabla}'_{[\delta_3,\delta_4]} E_2}.
  \end{align*}
  Setting
  $\hat{R}(\der_3,\der_4)E_2:=\hat{\nabla}'_3\hat{\nabla}'_4
  E_2-\hat{\nabla}'_4\hat{\nabla}'_3
  E_2-\hat{\nabla}'_{[\delta_3,\delta_4]} E_2$ one obtains
  \begin{align*}
    h'\paraa{E_1',R'(\delta_3,\delta_4)E_2'}
    &=h'\paraa{\psih(E_1),\psih(\hat{R}(\der_3,\der_4)E_2)}=\phi(h\paraa{E_1,\hat{R}(\der_3,\der_4)E_2})\\
    &=\phi\para{h\paraa{E_1,\hat{\nabla}'_3\hat{\nabla}'_4 E_2-\hat{\nabla}'_4\hat{\nabla}'_3 E_2-\hat{\nabla}'_{[\delta_3,\delta_4]} E_2}}\\
    &=\phi\para{h\paraa{E_1,\nabla_3\hat{\nabla}'_4 E_2-\nabla_4\hat{\nabla}'_3 E_2-\nabla_{[\der_3,\der_4]} E_2}},
  \end{align*}
  since $E_1\in\MPsi$.
  Using the fact that
  $\nabla_i\hat{\nabla}'_j E_k=\nabla_i(\nabla_j
  E_k-\alpha(\delta_j,E_k))$ one may write
  \begin{equation*}
    h'\paraa{E_1',R'(\delta_3,\delta_4)E_2'}=
    \phi\para{h\paraa{E_1,R(\der_3,\der_4)E_2-\nabla_3\alpha(\delta_4,E_2)+\nabla_4\alpha(\delta_3,E_2)}},
  \end{equation*}
  and from this it follows immediately that 
  \begin{equation*}
    \begin{split}
      \phi\paraa{h(E_1,R(\d_3,\d_4)E_2)}
      =h'\paraa{E_1',R'(\delta_3,\delta_4)E_2'}
      &+\phi\para{h\paraa{E_1,\nabla_3\alpha(\delta_4,E_2)}}\\
      &-\phi\para{h\paraa{E_1,\nabla_4\alpha(\delta_3,E_2)}}.
    \end{split}
  \end{equation*}
  Since $(C_{\Alg},h,\nabla)$ is metric it follows that
  \begin{align*}
   h(E_1,\nabla_{\psi(\delta)}\xi)=-h(\nabla_{\psi(\delta)}E_1,\xi) 
  \end{align*}
  for $\xi\in \MPsi^{\perp}$, implying that
  \begin{equation*}
    \begin{split}
      \phi\paraa{h(E_1,R(\d_3,\d_4)E_2)}
      =h'\paraa{E_1',R'(\delta_3,\delta_4)E_2'}
      &+\phi\para{h\paraa{\nabla_4 E_1,\alpha(\delta_3,E_2)}}\\
      &-\phi\para{h\paraa{\nabla_3 E_1,\alpha(\delta_4,E_2)}},
    \end{split}
  \end{equation*}
  which completes the proof, since
  $h\paraa{\nabla_4
    E_1,\alpha(\delta_3,E_2)}=h\paraa{\alpha(\delta_4,E_1),\alpha(\delta_3,E_2)}$
  and
  $h\paraa{\nabla_3
    E_1,\alpha(\delta_4,E_2)}=h\paraa{\alpha(\delta_3,E_1),\alpha(\delta_4,E_2)}$.
\end{proof}

\section{Free real calculi and noncommutative mean curvature}

\noindent
In the examples we shall consider (the noncommutative torus and the
noncommutative 3-sphere), $M$ will be a free module with a basis given
by the image of a basis of the Lie algebra $\g$. Needless to say, the
fact that $M$ is a free module implies several
simplifications. Although it happens for the torus and the 3-sphere
that their modules of vector fields are free (i.e they are
parallelizable manifolds), one expects a projective module in
general. However, as originally shown in the case of the
noncommutative 4-sphere \cite{aw:cgb.sphere}, real calculi can provide
a way of performing local computations, in which case the (localized)
module of vector fields is free.

\begin{definition}
  A real calculus $C_{\Alg}=\RCa$ is called \emph{free} if there
  exists a basis $\der_1,...,\der_m$ of $\g$ such that
  $\varphi(\der_1),...,\varphi(\der_m)$ is a basis of $\Mod$ as a
  (right) $\Alg$-module.
\end{definition}

\noindent
Note that if there exists a basis $\d_1,\ldots,\d_m$ of $\g$ such that
$\varphi(\d_1),\ldots,\varphi(\d_m)$ is a basis of $M$, then
$\varphi(\d_1'),\ldots,\varphi(\d_m')$ is a basis of $M$ for any basis
$\d_1',\ldots,\d_m'$ of $\g$.

\begin{definition}
  A real metric calculus $(C_{\Alg},h)$ is called \emph{free} if
  $C_{\Alg}$ is free and $h$ is invertible.
\end{definition}

\noindent 
An immediate consequence of having an invertible metric, is the
existence of a Levi-Civita connection.

\begin{proposition}
  Let $(C_{\Alg},h)$ be a free real metric calculus. Then there exists
  a unique affine connection $\nabla$ such that $(C_{\Alg},h,\nabla)$
  is a pseudo-Riemannian calculus.
\end{proposition}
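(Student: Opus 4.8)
The plan is to use the free structure to reduce everything to Koszul's formula and then invoke Proposition~\ref{prop:satisfies.koszul}. Concretely, pick a basis $\der_1,\dots,\der_m$ of $\g$ such that $E_a := \varphi(\der_a)$ is a basis of $M$ as a right $\Alg$-module. By Corollary~\ref{cor:calculating.Levi.Civita}, it suffices to exhibit elements $m_{ab}\in M$ satisfying the Koszul identity
\begin{align*}
  2h(m_{ab},E_c) &= \der_a h(E_b,E_c)+\der_b h(E_a,E_c)-\der_c h(E_a,E_b)\\
  &\quad - h(E_a,\varphi([\der_b,\der_c])) + h(E_b,\varphi([\der_c,\der_a])) + h(E_c,\varphi([\der_a,\der_b]))
\end{align*}
for all $a,b,c$. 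Denote the right-hand side by $K_{abc}\in\Alg$, and expand $\varphi([\der_a,\der_b]) = E_d\Gamma^d_{ab}$ for suitable structure constants $\Gamma^d_{ab}\in\Alg$ (this is possible since the $E_d$ form a basis). Then $K_{abc}$ is a fixed element of $\Alg$ for each triple.

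The key point is that invertibility of $h$ lets us solve for $m_{ab}$. Write $h_{cd} := h(E_c,E_d)$, so that $\hhat(E_c)$ is the functional $n\mapsto h(E_c,n)$; invertibility of $\hhat:M\to M^\ast$ means that for the functional on $M$ sending $E_c\mapsto \tfrac12 K_{abc}$ (extended $\Alg$-linearly in the appropriate sense using that $M$ is free with basis $E_c$) there is a unique $m_{ab}\in M$ with $h(m_{ab},E_c)=\tfrac12 K_{abc}$ for all $c$. One must check this functional is well-defined on $M$: since $M$ is free on $\{E_c\}$, defining a right-$\Alg$-linear-up-to-conjugation functional by its values on the basis is unambiguous, and the hermitian structure $h_1$–$h_3$ guarantees $\hhat(m)$ lands in $M^\ast$; then $\hhat^{-1}$ produces $m_{ab}$. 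Setting $\nabla_{\der_a}E_b := m_{ab}$ and extending by the Leibniz rule (2) and (3) and $\Real$-linearity in the first slot gives a well-defined affine connection precisely because $M$ is free: every $n\in M$ is uniquely $n = E_b b^b$, so $\nabla_{\der} n := (\nabla_\der E_b) b^b + E_b \der(b^b)$ is forced and consistent. Then Corollary~\ref{cor:calculating.Levi.Civita} (or directly Proposition~\ref{prop:satisfies.koszul}) certifies $(C_\Alg,h,\nabla)$ is pseudo-Riemannian, and Theorem~\ref{thm:Levi.Civita.connection} gives uniqueness.

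The main obstacle — really the only nontrivial point — is verifying that $m_{ab}$ as produced by $\hhat^{-1}$ genuinely satisfies $h(m_{ab},E_c) = \tfrac12 K_{abc}$ for every $c$, i.e.\ that the "evaluate against the basis" functional is exactly $\hhat(m_{ab})$. This is where invertibility (as opposed to mere nondegeneracy) is essential: nondegeneracy only gives injectivity of $\hhat$, so one could not conclude the relevant functional is in the image. Once surjectivity of $\hhat$ onto $M^\ast$ is in hand, the argument is routine. I would also remark that the resulting connection is automatically real (satisfies the hermitian condition on $\nabla_\der\varphi(\d_1)$) and torsion-free as part of the conclusion of Proposition~\ref{prop:satisfies.koszul}, so no separate check is needed.
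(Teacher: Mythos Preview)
Your proposal is correct and follows essentially the same route as the paper: define the Koszul right-hand side $K_{abc}$, package it as a functional $\hat{K}_{ab}\in M^\ast$ via the free basis, invert $\hat{h}$ to produce $m_{ab}$, and then invoke Corollary~\ref{cor:calculating.Levi.Civita} for existence and Theorem~\ref{thm:Levi.Civita.connection} for uniqueness. Your ``main obstacle'' is not really an obstacle---once $m_{ab}=\hat{h}^{-1}(\hat{K}_{ab})$, the identity $h(m_{ab},E_c)=\hat{K}_{ab}(E_c)=\tfrac12 K_{abc}$ is immediate from the definition of $\hat{h}$---and the structure constants for $[\der_a,\der_b]$ are real (since $\g$ is a real Lie algebra), not general elements of $\A$.
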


\begin{proof}
  Let $\{\der_i\}$ be a basis of $\g$. Since $C_{\Alg}$ is free it
  follows that $E_i=\varphi(\der_i)$ provide a basis of
  $\Mod$. In this basis one gets the components $h_{ij}=h(E_i,E_j)$ of
  the metric $h$, and for notational convenience we set
  $h_{i,[j,k]}:=h(E_i,\varphi[\der_j,\der_k])$ and define $K_{ijk}\in\Alg$ as
  \begin{equation*}
    K_{ijk}:=\frac{1}{2}\para{\der_i h_{jk}+\der_j h_{ik}-\der_k h_{ij}-h_{i,[j,k]}+h_{j,[k,i]}+h_{k,[i,j]}}.
  \end{equation*}
  Now, define the linear functional $\hat{K}_{ij}\in \Mod^*$ by
  \begin{equation*}
    \hat{K}_{ij}(E_k b^k):=K_{ijk}b^k.
  \end{equation*}
  Since the metric $h$ is invertible,
  $m_{ij}=\hat{h}^{-1}(\hat{K}_{ij})\in \Mod$ is well-defined,
  and
  \begin{align*}
    2h(m_{ij},E_k)&=2\hat{h}(m_{ij})(E_k)=2\hat{K}_{ij}(E_k)=2K_{ijk}\\
                  &=\der_i h_{jk}+\der_j h_{ik}-\der_k h_{ij}-h_{i,[j,k]}+h_{j,[k,i]}+h_{k,[i,j]}.
  \end{align*}
  From Corollary~\ref{cor:calculating.Levi.Civita} it now follows that
  there exists a connection $\nabla$ such that $(C_{\Alg},h,\nabla)$
  is pseudo-Riemannian, and from
  Theorem~\ref{thm:Levi.Civita.connection} it follows that $\nabla$ is
  unique.
\end{proof}

\noindent
Given a free real metric calculus $(\CA,h)$ and a basis
$\d_1,\ldots,\d_m$ of $\g$, we write
\begin{align*}
  E_a=\varphi(\d_a)\qquad
  h_{ab}=h(E_a,E_b)\qquad
  [\d_a,\d_b]=f_{ab}^c\d_c
\end{align*}
with $f_{pq}^r\in\reals$, giving
$h(E_a,\varphi([\der_b,\der_c]))=h_{ar}f_{bc}^r$. The fact
that $\hh$ is invertible and $\{E_a\}_{a=1}^m$ is a basis of $M$,
implies that there exists $h^{ab}\in\A$ such that
\begin{align*}
  \hh^{-1}(\hat{E}^a) = E_bh^{ba}\implies
  h^{ab}=\hat{E}^a\paraa{\hh^{-1}(\hat{E}^b)}=h\paraa{\hh^{-1}(\hat{E}^a),\hh^{-1}(\hat{E}^b)}
\end{align*}
where $\{\hat{E}^a\}_{a=1}^m$ is the basis of $M^\ast$ dual to
$\{E_a\}_{a=1}^m$.  It follows that $(h^{ab})^\ast=h^{ba}$ and
\begin{align*}
  h^{ab}h_{bc}=h_{cb}h^{ba} = {\delta^a}_c\mid.
\end{align*}
For a free real metric calculus,we introduce the \emph{Christoffel
  symbols} $\Gamma^a_{bc}\in\A$ as the (unique) coefficients
$\nabla_b E_c=E_a\Gamma^a_{bc}$. Let us now derive an explicit formula
for the Christoffel symbols in terms of the components of the
metric. Indeed, by Koszul's formula it follows that
\begin{equation*}
  h(E_a\Gamma^a_{bc},E_d)=h(\nabla_b E_c,E_d)=
  \frac{1}{2}\left(\der_b h_{cd}+\der_c h_{bd}-\der_d h_{bc}-h_{br}f_{cd}^r+h_{cr}f_{db}^r+h_{dr}f_{bc}^r\right),
\end{equation*}
and since the right hand side is hermitian, one obtains
\begin{equation*}
  h_{da}\Gamma^a_{bc}=\frac{1}{2}\left(\der_b h_{cd}+\der_c h_{bd}-\der_d h_{bc}-h_{br}f_{cd}^r+h_{cr}f_{db}^r+h_{dr}f_{bc}^r\right).
\end{equation*}
Multiplying from the left by $h^{pd}$ gives
\begin{align}
  \Gamma^p_{bc}
  &=\frac{1}{2}h^{pd}\left(\der_b h_{cd}+\der_c h_{bd}-\der_d h_{bc}-h_{br}f_{cd}^r+h_{cr}f_{db}^r\right)+f^p_{bc}\One
\end{align}
and, in particular, if $[\d_a,\d_b]=0$ for all $a,b=1,\ldots,m$ then
\begin{equation}\label{eqn:Christoffel.symbols}
  \Gamma^a_{bc}=\frac{1}{2}h^{ad}\left(\der_b h_{cd}+\der_c h_{bd}-\der_d h_{bc}\right),
\end{equation}
in correspondence with the classical formula.

Let $(\CA,h)$ and $(\CAp,h')$ be free real metric calculi and let
$(\phi,\psi,\psih):(\CA,h)\to(\CAp,h')$ be an isometric embedding.
Since $\psi$ is injective, it is easy to see that if
$\{\delta_i\}_{i=1}^{m'}$ is a basis of $\g'$, then
$\{\Psi(\delta_i)\}_{i=1}^{m'}$ is a basis of $\MPsi$, implying that
$\MPsi$ is a free module of rank $m'$.
Let us now proceed to the define mean curvature, as well as
minimality, of an embedding of free real metric calculi. Since we are
working with extensions of vector fields on the embedded manifold
$\Sigma'$, rather than tangent vectors at points on $\Sigma'$, it is
more natural to consider the restriction (to $\Sigma'$) of the inner
product of the mean curvature vector with an arbitrary vector, rather
than the mean curvature vector itself.

\begin{definition}\label{def:mean.curvature}
  Let $(C_{\Alg},h)$ and $(C_{\Alg'},h')$ be free real metric calculi
  and let $(\phi,\psi,\psih):(\CA,h)\to(\CAp,h')$ be an isometric
  embedding.  Given a basis $\{\delta_i\}_{i=1}^{m'}$ of $\g'$, the
  \emph{mean curvature} $H_{\Algprim}:\Mod\rightarrow \Algprim$ of the
  embedding is defined as
  \begin{equation}\label{eq:def.mean.curvature}
    H_{\Algprim}(m)=\phi\para{h\paraa{m,\alpha(\delta_i,\Psi(\delta_j))}}h'^{ij},
  \end{equation}
  giving trivially $H_{\Algprim}(m)=0$ for $m\in \MPsi$. An embedding
  is called \emph{minimal} if $H_{\Algprim}(\xi)=0$ for all
  $\xi\in \MPsi^{\perp}$.
\end{definition}

\begin{remark}
  Note that the ordering in \eqref{eq:def.mean.curvature} is natural
  in the following sense. Considering the restriction of the metric
  $h$ to $\MPsi$, given by $h_{ij}=h(\Psi(\delta_i)),\Psi(\delta_j))$
  and its inverse $h^{ij}$, the fact that $M$ is a right module gives
  a natural definition of the mean curvature as
  \begin{align*}
    H_{\Algprim}(m)
    &=\phi\para{h\paraa{m,\alpha(\delta_i,\Psi(\delta_j))h^{ij}}}
    =\phi\para{h\paraa{m,\alpha(\delta_i,\Psi(\delta_j))}}\phi(h^{ij})\\
    &=\phi\para{h\paraa{m,\alpha(\delta_i,\Psi(\delta_j))}}h'^{ij},
  \end{align*}
  reproducing the formula in Definition~\ref{def:mean.curvature}.
\end{remark}

\noindent
Although defined with respect to a basis of $\g'$, the mean curvature
is independent of the choice of basis. Indeed, if we let $h'_{ij}$
and $\tilde{h}'_{ij}$ denote the components of the metric $h'$ with
respect to different bases $\{\delta_i\}$ and $\{\tilde{\delta}_i\}$
of $\g'$, then there exists a (real) invertible matrix $A$ such that
$\tilde{h'}=Ah'A^T$, or equivalently
$\tilde{h'}_{ij}={A^k}_ih'_{kl}{A^{l}}_j$.  Consequently,
\begin{align*}
  (\tilde{h'})^{ij}
  ={(A^{-1})^i}_kh'^{kl}{(A^{-1})^j}_l
\end{align*}
and it follows that the mean curvature calculated using the basis
$\{\tilde{\delta}_i\}$ is
\begin{align*}
  H_{\Alg'}(m)
  &=\phi\para{h\paraa{m,\alpha(\tilde{\delta}_i,\Psi(\tilde{\delta}_j))}}(\tilde{h'})^{ij}\\
  &=\phi\para{h\paraa{m,\alpha({A^k}_i\delta_k,\Psi({A^l}_j\delta_l))}}
    {(A^{-1})^i}_mh'^{mn}{(A^{-1})^j}_n\\
  &={A^k}_i{(A^{-1})^i}_m{A^l}_j{(A^{-1})^j}_n\phi\para{h\paraa{m,\alpha(\delta_k,\Psi(\delta_l))}}h'^{mn}\\
  &=\phi\para{h\paraa{m,\alpha(\delta_k,\Psi(\delta_l))}}(h')^{kl}
\end{align*}
showing that the definition of $H_{\A'}$ is indeed basis independent.

Let us end this section by noting that it is straight-forward to
define the gradient, divergence and Laplace operator for free real
metric calculi.

\begin{definition}
  Let $(C_{\Alg},h)$ be a free real metric calculus and let $\nabla$
  denote the Levi-Civita connection. Moreover, let $\{\d_a\}_{a=1}^m$ be a basis
  of $\g$ and set $E_a=\varphi(\d_a)$.  The \textit{gradient}
  $\operatorname{grad}:\A\to M$ is defined as
  \begin{equation*}
    \operatorname{grad}(a)=E_ah^{ab}\d_b a
  \end{equation*}
  for $a\in\A$. The \emph{divergence} $\operatorname{div}:M\to\A$ is
  defined as
  \begin{align*}
    \operatorname{div}(m) = (\nabla_{\d_a}m)^a
  \end{align*}
  for $m\in M$, where $\nabla_{\d_a}m=E_b(\nabla_{\d_a}m)^b$. The
  Laplace operator $\Delta:\A\to\A$ is defined as
  $\Delta(a)=\operatorname{div}\paraa{\operatorname{grad}(a)}$ for
  $a\in\A$.
\end{definition}

\noindent
Note that it is easy to check that the above definitions are
independent of the choice of basis of $\g$.

\section{Minimal tori in the 3-sphere}\label{sec:torus.embedded.into.3sphere}

\noindent
The 3-sphere has a rich flora of minimal surfaces, and the fact that
minimal surfaces of arbitrary genus exist in $S^3$ is a famous result
by Lawson \cite{l:completeminimalsurfaces}. As an illustration of the
concepts we have developed, as well as being our motivating example,
we shall consider the noncommutative torus minimally embedded in the
noncommutative 3-sphere. However, rather than the round metric on
$S^3$, we will consider more general metrics. Therefore, let
us start by recalling the classical situation.

The Clifford torus $T^2$ is embedded in $S^3\subseteq \mathbb{R}^4$ via
\begin{equation*}
\vec{x}=\frac{1}{\sqrt{2}}(x^1,x^2,x^3,x^4)=(\cos \varphi_1,\sin \varphi_1,\cos \varphi_2,\sin \varphi_2).
\end{equation*}
With $\delta_1=\d_{\varphi_1}$ and $\delta_2=\d_{\varphi_2}$, the tangent
space at a point is spanned by
\begin{align*}
\delta_1 \vec{x}&=\frac{1}{\sqrt{2}}(-\sin \varphi_1, \cos \varphi_1,0,0)=(-x^2,x^1,0,0)\\
\delta_2 \vec{x}&=\frac{1}{\sqrt{2}}(0,0,-\sin \varphi_2,\cos \varphi_2)=(0,0,-x^4,x^3).
\end{align*}
The 3-sphere is embedded in $\complex^2$ via
\begin{align*}
  z&=e^{i\xi_1}\sin \eta\\ w&=e^{i\xi_2}\cos \eta,
\end{align*}
and with $\d_1=\d_{\xi_1}$ and $\d_2=\d_{\xi_2}$ the tangent space at
a point with $0 < \xi_1,\xi_2 < 2\pi$ and $0 < \eta < \pi/2$ is
spanned by
\begin{align*} E_1&=\der_1(x^1,x^2,x^3,x^4)=(-x^2,x^1,0,0)\\
E_2&=\der_2(x^1,x^2,x^3,x^4)=(0,0,-x^4,x^3)\\
E_{\eta}&=\der_{\eta}(x^1,x^2,x^3,x^4)=(\cos\xi_1\cos\eta,\sin\xi_1\cos\eta,-\cos\xi_2\sin\eta,-\sin\xi_2\sin\eta).
\end{align*} 
\noindent
The standard metric on $S^3$ is given by
\begin{equation*}
  g=\begin{pmatrix}
    \sin^2{\eta} & 0 & 0 \\
    0 & \cos^2{\eta} & 0 \\
    0 & 0 & 1 
  \end{pmatrix},
\end{equation*}
and for $H\in C^{\infty}(S^3)$ such that $H>0$ we consider the perturbed metric
\begin{equation*}
  \tilde{g}=H\begin{pmatrix}
    \sin^2{\eta} & 0 & 0 \\
    0 & \cos^2{\eta} & 0 \\
    0 & 0 & 1 
  \end{pmatrix}H.
\end{equation*}
Let us now proceed to determine the Levi-Civita connection on
$(S^3,\tilde{g})$.  The Christoffel symbols are computed using
\begin{equation*}
  \Gamma^i_{jk}=\frac{1}{2}\tilde{g}^{il}\left(\der_j\tilde{g}_{kl}+\der_k\tilde{g}_{jl}-\der_l\tilde{g}_{jk}\right),
\end{equation*}
giving
\begin{equation*}
  \Gamma^1_{jk}=\begin{pmatrix}
    \der_1 (\ln H) & \der_2 (\ln H) & \der_{\eta} (\ln H)+\cot{\eta}\\
    \der_2 (\ln H) &-\der_1(\ln H)\cot^2{\eta}& 0\\
    \der_{\eta} (\ln H)+\cot{\eta} & 0 & -\der_1(\ln H)\csc^2{\eta}
  \end{pmatrix}
\end{equation*}
\begin{equation*}
  \Gamma^2_{jk}=\begin{pmatrix}
    -\der_2 (\ln H)\tan^2{\eta} & \der_1 (\ln H) & 0\\
    \der_1 (\ln H) & \der_2(\ln H) & \der_{\eta}(\ln H)-\tan{\eta}\\
    0 & \der_{\eta}(\ln H)-\tan{\eta} & -\der_2(\ln H)\sec^2{\eta}
  \end{pmatrix}
\end{equation*}
\begin{equation*}
  \Gamma^3_{jk}=\begin{pmatrix}
    -\der_{\eta} (\ln H)\sin^2{\eta}-\sin{\eta}\cos{\eta} & 0 & \der_1(\ln H)\\
    0 & -\der_{\eta}(\ln H)\cos^2{\eta}+\sin{\eta}\cos{\eta} & \der_2(\ln H)\\
    \der_1(\ln H) & \der_2(\ln H) & \der_{\eta}(\ln H)
  \end{pmatrix}.
\end{equation*}
Thus, the Levi-Civita connection is explicitly given as

\begin{align*}
  &\nabla_{1}\der_1=\der_1(\ln H)\der_1-\der_2(\ln H)\tan^2{\eta}\der_2-(\der_{\eta}(\ln H)\sin^2{\eta}+\sin{\eta}\cos{\eta})\d_{\eta}\\
  &\nabla_{1}\der_2=\der_2(\ln H)\der_1+\der_1(\ln H)\der_2=\nabla_{2}\der_1 \\
  &\nabla_{1}\der_{\eta}=(\der_{\eta}(\ln H)+\cot{\eta})\der_1+\der_1(\ln H)\d_{\eta}=\nabla_{\eta}\der_1 \\
  &\nabla_{2}\der_2=-\der_1(\ln H)\cot^2{\eta}\der_1+\der_2(\ln H)\der_2+(\sin{\eta}\cos{\eta}-\der_{\eta}(\ln H)\cos^2{\eta})\d_{\eta} \\
  &\nabla_{2}\der_{\eta}=(\der_{\eta}(\ln H)-\tan{\eta})\der_2+\der_2(\ln H)\d_{\eta}=\nabla_{\eta}\der_2 \\
  &\nabla_{\eta}\der_{\eta}=-\der_1(\ln H)\csc^2{\eta}\der_1-\der_2(\ln H)\sec^2{\eta}\der_2+\der_{\eta}(\ln H)\d_{\eta}.
\end{align*}

\subsection{Embedding the torus into the 3-sphere}

For fixed $\eta_0\in(0,\pi/2)$, let $f_{\eta_0}:T^2\rightarrow (S^3,\tilde{g})$ denote the
embedding
\begin{align*}
  f_{\eta_0}:(\cos \varphi_1,\sin \varphi_1,\cos \varphi_2,\sin \varphi_2)\mapsto
  (e^{i\varphi_1}\sin\eta_0,e^{i\varphi_2}\cos\eta_0),
\end{align*}
The induced metric on the torus is given by
\begin{equation*}
  g_{T^2}=\tilde{H}
  \begin{pmatrix}
    \sin^2{\eta_0} & 0 \\
    0 & \cos^2{\eta_0}
  \end{pmatrix}\tilde{H},
\end{equation*}
where $\tilde{H}(\varphi_1,\varphi_2)=H(\varphi_1,\varphi_2,\eta_0)$.
The unit normal of $T^2$ is $N=\tilde{H}^{-1}\d_{\eta}$, and one
writes the second fundamental form $\alpha$ as:
\begin{align*}
  \alpha(\delta_1,\delta_1)&=-\tilde{H}(\d_{\eta}(\ln H)|_{\eta_0}\sin^2{\eta_0}+\sin{\eta_0}\cos{\eta_0})N\\
  \alpha(\delta_1,\delta_2)&=\alpha(\delta_2,\delta_1)=0\\
  \alpha(\delta_2,\delta_2)&=\tilde{H}(\sin{\eta_0}\cos{\eta_0}-\der_{\eta}(\ln H)|_{\eta_0}\cos^2{\eta_0})N.
\end{align*}
Calculating the mean curvature of $T^2$ in $(S^3,\tilde{g})$ yields
\begin{align*}
  H_{T^2}&=\frac{1}{2}\tilde{g}\paraa{N,\alpha\paraa{\delta_i,\delta_j}}g_{T^2}^{ij}
  =-\tilde{H}^{-1}(\cot{2\eta_0}+\der_{\eta}(\ln H)|_{\eta_0})
\end{align*}
and it follows that $T^2$ is minimally embedded in $(S^3,\tilde{g})$
if $\d_{\eta}(\ln H)|_{\eta_0}=-\cot 2\eta_0$; for instance, one might
choose 
\begin{equation*}
  H(\xi_1,\xi_2,\eta)=\exp\left(p(\xi_1,\xi_2)-\frac{r(\eta)\cot{2\eta_0}}{r'(\eta_0)}\right)
\end{equation*}
for arbitrary functions $p$ and $r$, with $r$ having a nonzero derivative at
$\eta=\eta_0$. In the classical case, when $H=1$, the embedding is
minimal if $\cot 2\eta_0=0$, i.e. $\eta_0=\pi/4$.

\section{The noncommutative minimal torus}\label{sec:NCtorus.NC3sphere}

\noindent
Let us now apply the framework for noncommutative embeddings to the
case of the noncommutative torus and the noncommutative 3-sphere. We
shall start by recalling their definitions, as well as their
corresponding real metric calculi. For more details, we refer to
\cite{aw:curvature.three.sphere} (however, where only the standard
metric on the 3-sphere was considered).

\subsection{The noncommutative torus}

The noncommutative torus $\Torus$ is a unital $\ast$-algebra generated
by the unitary elements $U,V$ subject to the relation $VU=qUV$, with
$q=e^{2\pi i\theta}$.  Introducing the hermitian elements
\begin{alignat*}{2}
  X^1&=\frac{1}{2}(U+U^*) &\qquad X^2&=\frac{1}{2i}(U-U^*)\\
  X^3&=\frac{1}{2}(V+V^*) & X^4&=\frac{1}{2i}(V-V^*)
\end{alignat*}
gives $\One=UU^*=(X^1)^2+(X^2)^2$ and $\One=VV^*=(X^3)^2+(X^4)^2$.  In
analogy with the geometrical setting, let $\Modprim$ be the (right)
submodule of $(\Torus)^4$ generated by
\begin{align*}
  e_1&=(-X^2,X^1,0,0)\\
  e_2&=(0,0,-X^4,X^3).
\end{align*}
We note that $\Modprim$ is a free $\Torus$-module, since $e_1$ and $e_2$ form a basis for $\Modprim$:
\begin{align*}
  e_1a+e_2b=0 &\implies (-X^2a,X^1a,-X^4b,X^3b)=(0,0,0,0)\\
              &\implies 
                \left\{\begin{array}{lr}
                         \FofG{}{(X^1)^2+(X^2)^2}a=UU^*a=a=0\\
                         \FofG{}{(X^3)^2+(X^4)^2}b=VV^*b=b=0.
                       \end{array}\right.
\end{align*}
Next, we let $\Derprim$ be  the (real) Lie algebra generated by the two hermitian derivations $\delta_1,\delta_2$, given by
\begin{alignat*}{2}
  \delta_1 U&=iU &\qquad \delta_1 V&=0\\
  \delta_2 U&=0  & \delta_2 V&=iV,
\end{alignat*}
satisfying $[\delta_1,\delta_2]=0$. Finally, let
$\varphi':\Derprim\rightarrow \Modprim$ with $\varphi'(\delta_j)=e_j$
for $j=1,2$ and extended by $\Real$-linearity, which implies that
$\Modprim$ is generated by $\varphi'(\Derprim)$ as a
$\Torus$-module. Hence, we have shown that
$C_{\Torus}=(\Torus,\Derprim,\Modprim,\varphi')$ is a real calculus
over the noncommutative torus.

As a first illustration of a real calculus homomorphism, let us
construct a family of automorphisms of $\Torus$ as
follows. Let $a,b,c,d\in \mathbb{Z}$ be given such that $ad-bc=1$,
and let $\alpha:\Torus\rightarrow\Torus$ be the automorphism given by
\begin{align*}
	\alpha(U)&=U^aV^b,\\
    \alpha(V)&=U^cV^d,
\end{align*}
with inverse
\begin{align*}
  \alpha^{-1}(U)&=q^{\frac{1}{2}bd(a-c-1)}U^dV^{-b}\\
  \alpha^{-1}(V)&=q^{\frac{1}{2}ac(d-b-1)}U^{-c}V^{a}.                  
\end{align*}
Once the automorphism $\alpha$ is established, it is a simple task to
find a real calculus automorphism from $C_{\Torus}$ to itself by using
Proposition~\ref{prop:RCIsoLemma} to find the required Lie algebra
homomorphism.  Indeed, Proposition~\ref{prop:RCIsoLemma} implies that
\begin{align*}
  \psi(\delta_1)(U)&=\alpha^{-1}\circ\delta_1\circ\alpha(U)=iaU & \psi(\delta_2)(U)&=\alpha^{-1}\circ\delta_2\circ\alpha(U)=ibU\\
    \psi(\delta_1)(V)&=\alpha^{-1}\circ\delta_1\circ\alpha(V)=icV  & \psi(\delta_2)(V)&=\alpha^{-1}\circ\delta_2\circ\alpha(V)=idV,
\end{align*}
giving
\begin{align*}
  \psi(\delta_1)&=a\delta_1+c\delta_2\mathand
  \psi(\delta_2)=b\delta_1+d\delta_2.
\end{align*}
From the compatibility conditions $\psih(\Psi(\delta_i))=\varphi'(\delta_i)$ one obtains
\begin{align*}
  \psih\paraa{e_1a+e_2c} = e_1\mathand
  \psih\paraa{e_1b+e_2d}= e_2,
\end{align*}
implying that
\begin{align*}
  \psih(e_1) = e_1d-e_2c\mathand
  \psih(e_2) = -e_1b+e_2a.
\end{align*}
This ensures that $(\alpha,\psi,\psih)$ as defined above is an automorphism of the real calculus $C_{\Torus}$.

\subsection{The noncommutative 3-sphere}
The noncommutative 3-sphere $\Sphere$ is the unital $\ast$-algebra
generated by $Z,Z^*,W,W^*$ satisfying
\begin{align*}
  WZ&=qZW &W^*Z&=\bar{q}ZW^* &WZ^*&=\bar{q}Z^*W
  &W^*Z^*=qZ^*W^*\\ Z^*Z&=ZZ^* &W^*W&=WW^* &WW^*&=\One-ZZ^*,&
\end{align*}
with $q=e^{2\pi i\theta}$ for $\theta\in\reals$.

Similar to the case of $\Torus$, we introduce
\begin{align*} X^1&=\frac{1}{2}(Z+Z^*) & X^2&=\frac{1}{2i}(Z-Z^*)\\
X^3&=\frac{1}{2}(W+W^*) & X^4&=\frac{1}{2i}(W-W^*)\\ |Z|^2&=ZZ^* &
|W|^2&=WW^*,
\end{align*}
giving $|Z|^2=(X^1)^2+(X^2)^2$ and $|W|^2=(X^3)^2+(X^4)^2$; recall
that $|Z|^2$ and $|W|^2$ are in the center of $\Sphere$ and,
furthermore, neither of them is a zero divisor
(cf. \cite{aw:curvature.three.sphere}). Let us now construct a real
metric calculus for $\Sphere$, closely related to the Hopf fibration
of the 3-sphere.

Recall from Section~\ref{sec:torus.embedded.into.3sphere} that $S^3$
can be given in terms of the coordinates $(\xi_1,\xi_2,\eta)$, and we
noted that the tangent plane at a given point is spanned by the three
vectors
\begin{align*} E_1&=\der_1(x^1,x^2,x^3,x^4)=(-x^2,x^1,0,0)\\
E_2&=\der_2(x^1,x^2,x^3,x^4)=(0,0,-x^4,x^3)\\
E_{\eta}&=\der_{\eta}(x^1,x^2,x^3,x^4)=(\cos\xi_1\cos\eta,\sin\xi_1\cos\eta,-\cos\xi_2\sin\eta,-\sin\xi_2\sin\eta).
\end{align*}
For the noncommutative analogue, it is apparent how to choose $E_1$
and $E_2$, but the analogue of $E_{\eta}$ is less clear. Therefore,
instead of $\der_{\eta}$, one considers the derivation
$\der_3=|z||w|\der_{\eta}$, giving
\begin{equation*}
E_3=\der_3(x^1,x^2,x^3,x^4)=(x^1|w|^2,x^2|w|^2,-x^3|z|^2,-x^4|z|^2),
\end{equation*} which can be used together with $E_1$ and $E_2$ to
span the tangent space.

Returning to the complex embedding coordinates $z$ and $w$ in
$\mathbb{C}^2$, one finds
\begin{align}
  \der_1(z)&=iz &\der_1(w)&=0 \\
  \der_2(z)&=0
                &\der_2(w)&=iw \\
  \der_3(z)&=z|w|^2 &\der_3(w)&=-w|z|^2,
\end{align}
and with respect to the basis $\{E_1,E_2,E_3\}$ of the tangent space
of $S^3$, the induced standard metric becomes
\begin{equation} (h_{ab})=(h(E_a,E_b))=\begin{pmatrix} |z|^2 & 0 & 0
\\ 0 & |w|^2 & 0 \\ 0 & 0 & |z|^2|w|^2
\end{pmatrix}.
\end{equation}
Motivated by the above considerations, let $M$ the submodule of the free
(right) module $(\Sphere)^4$ generated by $\{E_1,E_2,E_3\}$, where
\begin{align*} E_1&=(-X^2,X^1,0,0)\\ E_2&=(0,0,-X^4,X^3)\\
E_3&=(X^1|W|^2,X^2|W|^2,-X^3|Z|^2,-X^4|Z|^2).
\end{align*}
In \cite{aw:curvature.three.sphere} it was shown that $M$ is a free
module with a basis $\{E_1,E_2,E_3\}$ and that there exist hermitian
derivations $\d_1,\d_2,\d_3$ such that
\begin{align*}
  \der_1(Z)&=iZ & \der_1(W)&=0\\ \der_2(Z)&=0 &
                                                \der_2(W)&=iW\\ \der_3(Z)&=Z|W|^2 & \der_3(W)&=-W|Z|^2,
\end{align*}
with $[\der_a,\der_b]=0$ for $a,b=1,2,3$.  Let $\Der$ be the (real)
Lie algebra generated by $\der_1,\der_2$ and $\der_3$, and define
$\varphi:\Der\rightarrow\Mod$ as the linear map (over $\mathbb{R}$)
given by $\varphi(\der_a)=E_a$ for $a=1,2,3$.  From the above
considerations, it follows that $C_{\Sphere}=(\Sphere,\g,M,\varphi)$ is
a real calculus over $\Sphere$.

Now, let us proceed to construct a real metric calculus over
$\Sphere$, in which we shall minimally embed the noncommutative torus.
In analogy with Section~\ref{sec:torus.embedded.into.3sphere}, we
choose the hermitian form $h:M\times M\to M$
\begin{equation*}
  h(m,n)=\sum_{a,b=1}^3 (m^a)^*h_{ab}n^b,
\end{equation*}
where $m=E_a m^a,n=E_b n^b$ and
\begin{equation*}
  (h_{ab})=H\begin{pmatrix} |Z|^2 & 0 & 0 \\ 0 &
    |W|^2 & 0 \\ 0 & 0 & |Z|^2|W|^2 \\
\end{pmatrix}H^*,
\end{equation*}
where $H\in\Sphere$ is chosen such that $HH^*$ is invertible. Since neither $|Z|^2$ nor $|W|^2$ is a zero divisor, the
metric is clearly non-degenerate; furthermore, $h_{ab}$ is hermitian
for $a,b=1,2,3$. We conclude that $(C_{\Sphere},h)$ is a real metric
calculus.

Next, let us construct a metric and torsion-free connection on
$(C_{\Sphere},h)$. In order achieve this, we will localize the
algebra at $|Z|^2$ and $|W|^2$. That is, one extends the algebra of
the noncommutative 3-sphere by the inverses of $|Z|^2$ and
$|W|^2$. (In principle, for a well-behaved noncommutative
localization, one has to check the so called Ore conditions, but since
$|Z|^2$ and $|W|^2$ are central, these are trivially fulfilled.) The
resulting algebra is denoted by $\Sthreetloc$. It is straight-forward
to extend the real metric calculus $(C_{\Sphere},h)$ to a real metric
calculus $(C_{\Sthreetloc},h)$ (cf. \cite{aw:cgb.sphere} where a
similar construction was carried out for the 4-sphere).

\begin{proposition}
  There exists a unique affine connection $\nabla$ such that
  $(C_{S^3_{\theta,\text{loc}}},h,\nabla)$ is a pseudo-Riemannian
  calculus with
  \begin{align*} \nabla_1 E_1&=E_1 H_1-E_2
                               |Z|^2|W|^{-2}H_2-E_3(|W|^{-2}H_3+\One)\\ \nabla_1
    E_2&=\nabla_2 E_1=E_1 H_2+E_2 H_1\\ \nabla_1 E_3&=\nabla_3
                                                      E_1=E_1 (H_3+|W|^2)+E_3 H_1\\ \nabla_2
    E_2&=-E_1|W|^2|Z|^{-2}H_1+E_2H_2+E_3(\One-|Z|^{-2}H_3)\\
    \nabla_2 E_3&=\nabla_3 E_2=E_2(H_3-|Z|^2)+E_3H_2\\
    \nabla_3
    E_3&=-E_1|W|^2H_1-E_2|Z|^2H_2+E_3(H_3+|W|^2-|Z|^2),
  \end{align*}
  where $H_a=\frac{1}{2}(HH^*)^{-1}\der_a(HH^*)$ for $a=1,2,3$.
\end{proposition}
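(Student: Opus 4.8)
The plan is to identify $(C_{\Sthreetloc},h)$ as a \emph{free} real metric calculus and then to read off the connection from the Christoffel-symbol formula \eqref{eqn:Christoffel.symbols}. First I would observe that $M$ remains free over $\Sthreetloc$ with the same basis $\{E_1,E_2,E_3\}$ (adjoining inverses of central non-zero-divisors changes neither the module nor its basis), and that $h$ becomes invertible: since $|Z|^2$ and $|W|^2$ are central one may write $(h_{ab})=HH^{*}\cdot\diag(|Z|^2,|W|^2,|Z|^2|W|^2)$, and this matrix has the two-sided inverse $(h^{ab})=\diag(|Z|^{-2},|W|^{-2},|Z|^{-2}|W|^{-2})(HH^{*})^{-1}$, all factors now being invertible in $\Sthreetloc$ (recall $HH^{*}$ is invertible by hypothesis). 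Hence $(C_{\Sthreetloc},h)$ is a free real metric calculus, so the existence-and-uniqueness statement for free real metric calculi (together with Theorem~\ref{thm:Levi.Civita.connection}) already supplies a unique Levi-Civita connection $\nabla$; what remains is to verify the six displayed formulas.

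Because $[\der_a,\der_b]=0$ for $a,b=1,2,3$, the connection is given by $\nabla_b E_c=E_a\Gamma^a_{bc}$ with $\Gamma^a_{bc}=\half h^{ad}(\der_b h_{cd}+\der_c h_{bd}-\der_d h_{bc})$. To evaluate this I would first record the action of the derivations on the central generators: from $\der_1 Z=iZ$, $\der_1 W=0$ one gets $\der_1|Z|^2=\der_1|W|^2=0$, similarly $\der_2|Z|^2=\der_2|W|^2=0$, while $\der_3 Z=Z|W|^2$ and $\der_3 W=-W|Z|^2$ give $\der_3|Z|^2=2|Z|^2|W|^2$ and $\der_3|W|^2=-2|Z|^2|W|^2$ (consistent with $|Z|^2+|W|^2=\One$). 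Writing $\hat g=\diag(|Z|^2,|W|^2,|Z|^2|W|^2)$ and using $\der_a(HH^{*})=2(HH^{*})H_a$, the Leibniz rule yields $\der_a h_{bc}=2(HH^{*})\hat g_{bc}H_a+HH^{*}\der_a\hat g_{bc}$, where the central factors $\hat g_{bc}$, $|Z|^{\pm2}$, $|W|^{\pm2}$ may be moved past everything.

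Substituting these into $\Gamma^a_{bc}$ and simplifying then produces the claimed expressions for $\nabla_b E_c$; the symmetries $\nabla_1 E_2=\nabla_2 E_1$, $\nabla_1 E_3=\nabla_3 E_1$, $\nabla_2 E_3=\nabla_3 E_2$ are automatic from torsion-freeness combined with $[\der_a,\der_b]=0$, and the coefficients lie in $\Sthreetloc$ precisely because $(HH^{*})^{-1}$, $|Z|^{-2}$, $|W|^{-2}$ are available there. The only genuine work, and the main place to be careful, is this last simplification: one must track the order in which $H_a$, the central functions $|Z|^{\pm2},|W|^{\pm2}$ and $(HH^{*})^{\pm1}$, and the basis vectors $E_a$ appear, so that the result matches the stated right-module coefficients exactly --- for instance $\Gamma^3_{33}=\half h^{33}\der_3 h_{33}=H_3+|W|^2-|Z|^2$ and $\Gamma^1_{33}=-\half h^{11}\der_1 h_{33}=-|W|^2H_1$, giving the $E_3$- and $E_1$-terms of $\nabla_3 E_3$. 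Once the derivative values above are in hand the computation is mechanical, and it can be cross-checked against the classical Christoffel symbols of $(S^3,\tilde g)$ from Section~\ref{sec:torus.embedded.into.3sphere} in the commutative limit.
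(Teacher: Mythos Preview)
Your proposal is correct and follows essentially the same approach as the paper: both identify $(C_{\Sthreetloc},h)$ as a free real metric calculus (so that existence and uniqueness of the Levi-Civita connection are automatic), observe that the abelian Lie algebra allows the simplified Christoffel formula \eqref{eqn:Christoffel.symbols}, and then compute the symbols directly. Your write-up supplies more of the intermediate bookkeeping (the derivative values $\der_a|Z|^2$, $\der_a|W|^2$ and the Leibniz expansion of $\der_a h_{bc}$), but the paper's proof is otherwise the same argument, illustrated on $\nabla_1 E_1$ rather than $\nabla_3 E_3$.
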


\begin{proof}
  Since $h$ is invertible, $(C_{\Sthreetloc},h)$ is a free
  real metric calculus, implying that the Levi-Civita connection
  $\nabla$ exists. Moreover, $[\der_a,\der_b]=0$ for all
  $a,b\in \{1,2,3\}$, and thus it follows that the Christoffel symbols
  for $\nabla$ can be calculated directly using
  \eqref{eqn:Christoffel.symbols}.  For instance,
  \begin{align*}
  	&\Gamma^1_{11}=\frac{1}{2}h^{11}\der_1h_{11}=\frac{1}{2}(HH^*)^{-1}\der_1(HH^*)=H_1\\
  	&\Gamma^2_{11}=\frac{1}{2}h^{22}(-\der_2h_{11})=-\frac{1}{2}|Z|^2|W|^{-2}(HH^*)^{-1}\der_2(HH^*)=-|Z|^2|W|^{-2}H_2\\
  	&\Gamma^3_{11}=\frac{1}{2}h^{33}(-\der_3h_{11})=-\frac{1}{2}|W|^{-2}(HH^*)^{-1}\der_3(HH^*)-\One=-|W|^{-2}H_3-\One,
  \end{align*}
  giving
  \begin{align*}
    \nabla_1E_1=E_1H_1-E_2|Z|^2|W|^{-2}H_2-E_3(|W|^{-2}H_3+\One).
  \end{align*}
  The remaining Christoffel symbols are computed in a completely analogous way.
\end{proof}

\subsection{An embedding of the noncommutative torus}

\noindent
Finally, we will now construct an embedding
$(\phi,\psi,\psih):C_{\Sthreetloc}\to C_{\Torus}$. To this end, we set
\begin{align*}
	\phi(Z)&=\lambda U \\
	\phi(W)&=\mu V,
\end{align*}
where $\lambda$ and $\mu$ are complex nonzero constants such that
$|\lambda|^2+|\mu|^2=1$. It is easy to verify that with these
conditions $\phi$ is a $\ast$-algebra homomorphism. Moreover, since
$\lambda$ and $\mu$ are chosen to be nonzero it means that $\phi$ is
surjective as well.  With this choice of $\phi$ it follows that a Lie
algebra homomorphism $\psi:\Derprim\rightarrow\Der$ compatible with
$\phi$ is given by
\begin{align*}
	\psi(\delta_1)=\der_1\mathand
	\psi(\delta_2)=\der_2,
\end{align*}
and $\Mod_{\Psi}$ is the submodule of $\Mod$ generated by $E_1$ and
$E_2$. Furthermore,  with 
\begin{align*}
	\psih(E_1)=e_1\mathand
	\psih(E_2)=e_2
\end{align*}
$(\phi,\psi,\psih)$ is a real calculus homomorphism. This choice of
$(\phi,\psi,\psih)$ gives an embedding of $C_{\Torus}$ into
$C_{\Sthreetloc}$, since by choosing $\tilde{M}$ to be the submodule
of $\Mod$ generated by $E_3$ one gets that
$\Mod=\MPsi\oplus\tilde{M}$.

Let us now find the induced metric $h'$ such that
$(\phi,\psi,\psih):(C_{\Torus},h')\to(C_{\Sthreetloc},h)$ is an
embedding of real metric calculi.  Since $\Modprim$ has a basis
$\{e_1,e_2\}$ it suffices to calculate $h'(e_i,e_j)$ for $i,j=1,2$:
\begin{align*}
h'(e_1,e_1)&=\phi(h(E_1,E_1))=\phi((HH^*)|Z|^2)=|\lambda|^2(\tilde{H}\tilde{H}^*) \\
h'(e_1,e_2)&=h'(e_2,e_1)=\phi(h(E_1,E_2))=0 \\
h'(e_2,e_2)&=\phi(h(E_2,E_2))=\phi((HH^*)|W|^2)=|\mu|^2(\tilde{H}\tilde{H}^*),
\end{align*}
with $\tilde{H}=\phi(H)$; it is easy to check that $h'$ is an
invertible metric on $\Modprim$, implying that $(C_{\Torus},h')$ is
indeed a free real metric calculus. Moreover, it is clear that
$\tilde{M}=\MPsi^\perp$.

Since $\Mod$ and $\Modprim$ are free modules,
Proposition~\ref{prop:ConnectionExtension} can be used to quickly
determine the Levi-Civita connection $\nabla'$ for $(C_{\Torus},h')$:
\begin{align*}
\nabla'_1 e_1&=\psih(L(\delta_1,\Psi(\delta_1)))=e_1\tilde{H}_1-e_2\tilde{H}_2|\lambda|^2|\mu|^{-2} \\
\nabla'_1 e_2&=\nabla'_2 e_1=\psih(L(\delta_1,\Psi(\delta_2)))=e_1\tilde{H}_2+e_2\tilde{H}_1 \\
\nabla'_2 e_2&=\psih(L(\delta_2,\Psi(\delta_2)))=-e_1\tilde{H}_1|\lambda|^{-2}|\mu|^{2}+e_2\tilde{H}_2,
\end{align*}
where $\tilde{H}_i=\phi(H_i)$ for $i=1,2,3$.  Consequently, one
obtains the second fundamental form as
\begin{align*}
\alpha(\delta_1,\Psi(\delta_1))&=-E_3(|W|^{-2}H_3+\One)\\
\alpha(\delta_1,\Psi(\delta_2))&=\alpha(\delta_2,\Psi(\delta_1))=0\\
\alpha(\delta_2,\Psi(\delta_2))&=E_3(\One-|Z|^{-2}H_3),
\end{align*}
giving the mean curvature
\begin{align*}
  H_{T^2_{\theta}}(m)={}&\phi\para{h\paraa{m,\alpha(\delta_1,\Psi(\delta_1))}}(h')^{11}
                          +\phi\para{h\paraa{m,\alpha(\delta_2,\Psi(\delta_2))}}(h')^{22}\\
  ={}&\phi\para{h\paraa{m,-E_3(|W|^{-2}H_3+\One)}}|\lambda|^{-2}(\tilde{H}\tilde{H}^*)^{-1}\\
                        &+\phi\para{h\paraa{m,E_3(\One-|Z|^{-2}H_3)}}|\mu|^{-2}(\tilde{H}\tilde{H}^*)^{-1}\\
  ={}&\phi\left(h\paraa{m,E_3}\right)\left(|\mu|^{-2}-|\lambda|^{-2}-2|\lambda|^{-2}|\mu|^{-2}\tilde{H}_3\right)(\tilde{H}\tilde{H}^*)^{-1}.
\end{align*}
For the embedded torus, $\MPsi^{\perp}$ is the submodule of $\Mod$
generated by the basis element $E_3$. Hence, the mean curvature is
zero if
\begin{align*}
0=H_{T^2_{\theta}}(E_3)&=(\tilde{H}\tilde{H}^*)|\lambda|^2|\mu|^2\left(|\mu|^{-2}-|\lambda|^{-2}-2|\lambda|^{-2}|\mu|^{-2}\tilde{H}_3\right)(\tilde{H}\tilde{H}^*)^{-1}\\
&=(\tilde{H}\tilde{H}^*)\left(|\lambda|^{2}-|\mu|^{2}-2\tilde{H}_3\right)(\tilde{H}\tilde{H}^*)^{-1}\\
&=\left(|\lambda|^2-|\mu|^2\right)\One-2(\tilde{H}\tilde{H}^*)\tilde{H}_3(\tilde{H}\tilde{H}^*)^{-1}\\
&=\left(|\lambda|^2-|\mu|^2\right)\One-\phi(\der_3(HH^*))(\tilde{H}\tilde{H}^*)^{-1},
\end{align*}
implying that the embedding of $(C_{\Torus},h')$ into
$(C_{\Sphere},h)$ is minimal if and only if
\begin{align*}
  \phi\paraa{\der_3(HH^\ast)}=(|\lambda|^2-|\mu|^2)\phi(HH^\ast).
\end{align*}
In the special case where $\phi(\der_3(HH^\ast))=0$, the embedding is
minimal if $|\lambda|=|\mu|=1/\sqrt{2}$ (in analogy with the classical
case). For the same values of $|\lambda|$ and $|\mu|$ one may also
choose, e.g.,  $H=ZW$ giving $HH^\ast=|Z|^2|W|^2$ and 
\begin{align*}
  \phi\paraa{\d_3HH^\ast} = 2|\lambda|^2|\mu|^2\paraa{|\mu|^2-|\lambda|^2} = 0.
\end{align*}

\section*{Acknowledgments}

\noindent
We would like to thank J. Choe for discussions. Furthermore, J.A. is
supported by the Swedish Research Council grant 2017-03710.

\bibliographystyle{alpha}
\bibliography{references}

\end{document}